\documentclass[
	11pt,oneside,british,a4paper]{amsart}
\usepackage[T1]{fontenc}
\usepackage[utf8]{inputenc}
\usepackage[dvipsnames]{xcolor}
\usepackage[numbers]{natbib}
\usepackage{amstext,amsthm,amssymb}
\usepackage{xparse,mathrsfs,mathtools}
\usepackage{bookmark}
\usepackage[capitalise,nameinlink]{cleveref}
\usepackage{babel}
\usepackage[yyyymmdd,hhmmss]{datetime}
\usepackage{hyperref}
\usepackage{lmodern,microtype}

\hypersetup{
	unicode=true,%
	pdfdisplaydoctitle=true,%
	pdfpagemode=UseOutlines,%
	breaklinks=true,%
	pdfencoding=unicode,psdextra,
	pdfcreator={},
	pdfborder={0 0 0},
	hidelinks
}


\raggedbottom
\newtheorem{thm}{Theorem}[section]
\newtheorem{prop}[thm]{Proposition}
\newtheorem{lem}[thm]{Lemma}
\newtheorem{cor}[thm]{Corollary}

\newcommand{\NN}{\mathbb{N}}
\newcommand{\ZZ}{\mathbb{Z}}
\newcommand{\QQ}{\mathbb{Q}}
\newcommand{\RR}{\mathbb{R}}
\newcommand{\PP}{\mathbb{P}}

\newcommand{\LandauO}{\mathcal{O}}

\newcommand{\sumTwoSquares}{\delta}

\newcommand{\funcName}{f}
\newcommand{\innerFunc}{g}

\newcommand{\ConstQAsympt}{\kappa}
\newcommand{\ConstFLemma}{c_*}
\newcommand{\ConstGDagger}{c_\dagger}
\newcommand{\ConstF}{c_f}
\newcommand{\ConstCe}{c_{\mathrm{e}}}

\newcommand{\numberfieldK}{K}
\newcommand{\integers}{\mathscr{O}}
\newcommand{\norm}{\mathrm{N}}

\DeclarePairedDelimiter\parentheses{\lparen}{\rparen}
\DeclarePairedDelimiter\braces{\lbrace}{\rbrace}
\DeclarePairedDelimiter\floor{\lfloor}{\rfloor}

\numberwithin{equation}{section}
\crefformat{equation}{(#2#1#3)}
\crefformat{enumi}{(#2#1#3)}

\title[The maximal order of iterated multiplicative functions]{%
	The maximal order of iterated multiplicative functions}
\date{\today}

\subjclass[2010]{11N37}
\keywords{maximal order, arithmetic function, iterated function}

\author{Christian~Elsholtz}
\address{Christian~Elsholtz\\%
	Institute of Analysis and Number Theory\\%
	Graz University of Technology\\%
	Kopernikusgasse~24\\%
	8010~Graz\\%
	Austria}
\email{elsholtz@math.tugraz.at}

\author{Marc~Technau}
\address{Marc~Technau\\%
	Institute of Analysis and Number Theory\\%
	Graz University of Technology\\%
	Kopernikusgasse~24\\%
	8010~Graz\\%
	Austria}
\email{mtechnau@math.tugraz.at}

\author{Niclas~Technau}
\address{Niclas~Technau\\
Raymond and Beverly Sackler School of Mathematical Sciences\\
Tel Aviv University\\
69978~Tel Aviv\\
Israel}
\email{niclast@mail.tau.ac.il}
\thanks{The third author was supported, while working on this paper, 
by the Austrian~Science~Fund~(FWF): by either Project
W1230~Doctoral~Program~``Discrete Mathematics'' 
or Project Y-901.}

\begin{document}
\begin{abstract}
	Following Wigert, various authors, including Ramanujan, Gronwall, Erd\H{o}s, Ivi\'{c}, Schwarz\nocite{SchwarzandWirsing:1973}, Wirsing, and Shiu, determined the maximal order of several multiplicative functions, generalizing Wigert's result
	\[
		\max_{n\leq x} \log d(n) = \frac{\log x}{\log \log x} (\log 2 + o(1)).
	\]
	
	On the contrary, for many multiplicative functions, 
	the maximal order of iterations of the functions
	remains widely open. The case of the iterated divisor function was only 
	solved recently, answering a question of Ramanujan from~1915.\\
	Here we determine the maximal order of $\log f(f(n))$ for a class of multiplicative functions $f$.
	In particular,
	this class contains	functions counting ideals of given norm
	in the ring of integers of an arbitrary, fixed quadratic number field.
	As a consequence, we determine such maximal orders for several multiplicative $f$ arising as a normalized function counting representations by certain binary quadratic forms.
	Incidentally, for the non-multiplicative function $r_2$ which counts how often a positive integer is represented as a sum of two squares, this entails the asymptotic formula
	\[
		\max_{n\leq x} \log r_2(r_2(n))= \frac{\sqrt{\log x}}{\log \log x} \parentheses{ c/\sqrt{2}+o(1) }
	\]
	with some explicitly given constant $c>0$.
\end{abstract}
\maketitle

\section{Introduction}
\subsection{Maximal orders of multiplicative functions}
The study of the maximal order of arithmetic functions (for example of the
divisor functions $d$ or $\sigma$) is an integral part of introductory number
theory text books. For the divisor functions $d$ 
and $\sigma$, satisfactory answers are well known; see, 
for example, ~Wigert~\cite{Wigert:1907} 
and Gronwall~\cite{Gronwall:1913}.
Their proofs make use of the fact that $d$ and $\sigma$ are multiplicative functions. 
For the maximal order of magnitude of \emph{iterated} arithmetic functions
much less is known. Here are some reasons which show that this is generally
a very delicate subject:
\begin{enumerate}
	\item The iterate of a multiplicative function need not be multiplicative; 
	for instance, for any pairwise distinct primes 
	$p_1,\ldots,p_r$,
	\[
		\frac{d(d(p_1))\cdots d(d(p_r))}{d(d(p_1\cdots p_r))}
		= \frac{d(2)^r}{d(2^r)} = \frac{2^r}{r+1} \neq 1.
	\]
	\item Let $a(n)$ denote the number of abelian groups of order $n$.
	By results of Erd\H{o}s and Ivi\'{c}~\cite{Erdos-Ivic:1989}
	it is known that
	\[ 
		\qquad \quad \exp \parentheses[\big]{ (\log x)^{1/2+o(1)} }
		\ll \max_{n \leq x} a(a(n))\ll \exp \parentheses[\big]{ ( \log x)^{7/8+o(1)} },
	\] 
	leaving a large gap between lower and upper bounds.
	Improving these bounds would seem to require understanding 
	the multiplicative structure of the number $p(n)$ of unrestricted
	partitions, about which very little is known beyond certain congruences.
	\item
	Let $\sigma_1(n)=\sigma(n)$ be the sum of divisors function, and
	$\sigma_k(n)=\sigma_1(\sigma_{k-1}(n))$ its iterates.
	Schinzel~\cite{Schinzel:1959} conjectured that 
	\[
		\liminf_{n\rightarrow \infty} \frac{\sigma_k(n)}{n} < \infty.
	\]
	This is only known for $k=1,2$ and $3$ by results of M\k{a}kowski
	\cite{Makowski:1976} and Maier~\cite{Maier:1984}, and
	conditionally on Schinzel's Hypothesis~H. 
	In light of studying maximal orders of magnitude: the equivalent question
	$\limsup_{n\rightarrow \infty} \frac{n}{\sigma_k(n)} > 0$ is equally open.
	\item For the iterated Euler $\varphi$-function the situation 
	is quite different, compared to the iterated $\sigma$-function.
	In view of $\varphi(2^n)=2^{n-1}$ and $\varphi_k(2^n)=2^{n-k}$, 
	it is evident that, for any fixed $k$, the extremal order of magnitude is
	$\lim \sup_{n \rightarrow\infty}  \frac{\varphi_k(n)}{n} 
	\geq \frac{1}{2^k}$.
	As Maier~\cite{Maier:1984} points out, the situation changes 
	if one discards such thin sets of prime powers.
	If one studies large values of $\varphi_k(n)$ that occur
	for about $\frac{x}{\log x}$ values of $n \leq x$, 
	then the situation is very similar to
	the situation with the iterated $\sigma$-function, see above.\\
	However, there are other non-trivial results
	on the iterated $\varphi$-function, e.g.\ concerning the range 
	of the values of $\varphi$ by Ford~\cite{Ford:1998} and $\varphi_k$
	(Luca and Pomerance~\cite{Luca-Pomerance:2009}).
	Moreover, it is well known that the iterated $\varphi$-function 
	has applications to Pratt-trees, see e.g.~\cite{Bayless:2008}.
\end{enumerate}

In the case of multiplicative functions, 
the maximal order of magnitude was initially proved in a number of 
individual cases:
the maximal order of the divisor function $d$ has 
been determined by Wigert~\cite{Wigert:1907} 
and Ramanujan~\cite{Ramanujan:1915}.
They proved that
\[
\limsup_{n \rightarrow \infty} \frac{\log d(n) \log \log n}{\log n}
=\log 2, 
\]
where $\log$ denotes the logarithm with base $e$.
(Note that for functions of this magnitude one typically 
has an asymptotic for $\log(f(n))$ rather than for $f(n)$ itself. From our
perspective we will still say that the maximal order has been determined.)
This study subsequently influenced 
(via results of Hardy and Ramanujan, Tur\'{a}n  and Erd\H{o}s and Kac)
the development of probabilistic number theory.

Ramanujan studied the multiplicative function $\sumTwoSquares$ that counts
the number of representations of its argument as a sum of two squares 
ignoring sign, i.e.,
\begin{align}
	\sumTwoSquares(n) = \tfrac{1}{4} \, \#\{ (x,y)\in\ZZ\times\ZZ 
	\mid x^2+y^2=n \}.
	\label{eq:r2:def}
\end{align}
If $\nu_p$ denotes the $p$-adic valuation, 
then it is well-known (see, e.g.,~\cite[Theorem~278]{hardy1954}) that
\begin{align}
	\sumTwoSquares(n) = \prod_{\substack{
		\text{prime } q \mid n \\ q\equiv 1\bmod 4
	}} (\nu_q(n)+1)
	\times \prod_{\substack{
		\text{prime } p \mid n \\ p\equiv 3\bmod 4
	}} \tfrac{1}{2} \parentheses[\big]{ 1+(-1)^{\nu_p(n)} }.
	\label{eq:r2:formula}
\end{align}
(To be precise, Ramanujan called this function $Q_2(n)$, but here we follow the notation used by Hardy and Wright~\cite[Theorem~278]{hardy1954}.)
We observe that $4 \sumTwoSquares(n) = r_2(n)$, where $r_2(n)$ is the sum of two squares function which also takes care of signs. 
Ramanujan~\cite{Ramanujan:1997} showed  that for some positive constant $a$
\[ \max_{n \leq x} \delta(n)= 
\exp\parentheses*{ \frac{\log 2}{2} \operatorname{li}(2 \log x) + \LandauO\parentheses[\big]{(\log x)\exp(-a \sqrt{\log
    x})} }, 
\]
where the right hand side can be simplified to
\[ \exp\parentheses*{ (\log 2 +o(1)) \frac{\log x}{\log \log x} }.\]
This implies the very same logarithmic maximum order:
\[\limsup_{n \rightarrow \infty} \frac{\log r_2(n) \log \log n}{\log n}
=\log 2. \]
Knopfmacher~\cite{Knopfmacher:1975} and Nicolas~\cite{Nicolas:1987},
who were unaware\footnote{%
	At that time Ramanujan's work was unpublished: 
	quite remarkably, the end of Ramanujan's paper~\cite{Ramanujan:1915} 
	of 1915 was not intended to be the end.
	In fact, Ramanujan's manuscript was considerably longer
	and due to a shortage of resources during wartime 
	the London Mathematical Society printed only part of the manuscript.
	The second part has been recovered and published many years later, 
	first in~\cite{Ramanujan:1988}, but later with detailed 
	annotations by Nicolas and Robin~\cite{Ramanujan:1997}, and also
	\cite{Andrews-Berndt:2012}.%
} of Ramanujan's work, later also observed this. 

Ramanujan (see~\cite[§§~55--56]{Ramanujan:1997})
also achieved the very same result
\[ \max_{n \leq x} \bar{Q}_2(n)= 
\exp \parentheses*{ \frac{\log 2}{2} \operatorname{li}(2 \log x) + \LandauO\parentheses[\big]{(\log x)\exp(-a \sqrt{\log
    x})} }, 
\]
for the function
$\bar{Q}_2(n)$ counting non-negative pairs $(x,y)$ with $n=x^2+xy+y^2$,
\begin{equation}\label{eq:RamanujanQbar2}
	\bar{Q}_2(n) = \prod_{\substack{
		\text{prime } q \mid n \\ q\equiv 1\bmod 3
	}} (\nu_q(n)+1)
	\times \prod_{\substack{
		\text{prime } p \mid n \\ p\equiv 2\bmod 3
	}} \tfrac{1}{2} \parentheses[\big]{ 1+(-1)^{\nu_p(n)} }.
\end{equation}
Note that this quadratic form corresponds to the Eisenstein lattice 
$\mathbb{Z}[e^{2\pi i/3}]$, and non-negative coordinates correspond 
to a sector of $60$ degrees, 
which explains the factor $1/6$ in~\cref{eq:QForm:Eisenstein}; 
for a more conceptual 
explanation for the factor $1/6$, 
see the last display formula before \cref{sec:results}.

Kr\"atzel~\cite{Kraetzel:1970}
proved for the number $a(n)$ of non-isomorphic abelian groups of 
order $n$:
\[
	\limsup_{n \rightarrow \infty} \frac{\log a(n) \log \log n}{\log n}
	=\frac{1}{4} \log 5, 
\]
and Knopfmacher~\cite{Knopfmacher:1973} proved 
for the number $\beta(n)$ of squareful divisors of $n$:
\[
	\limsup_{n \rightarrow \infty} \frac{\log \beta(n) \log \log n}{\log n}
	=\frac{1}{3} \log 3. 
\]
Note that all of the functions $a$, $\beta$ and $d$ are prime independent, where a multiplicative arithmetic function $f$ is said to be \label{notion:PrimeIndependent}\emph{prime independent} if $f(p^\nu) = f(2^\nu)$ for every prime power $p^\nu$.

A number of authors independently observed that
such limits can be worked out more generally for the class of prime independent multiplicative functions. 
Of these results we only mention the one by Shiu~\cite{Shiu:1980},
but there are others---see~\cite{Babanazarov-Podzarskii:1987,DrozdovaandFreiman:1958,Heppner:1973,Hilberdink:2014,Knopfmacher:1975,Nicolas:1980,Norton:1992,Postnikov:1988,Suryanarayana-Rao:1975}.

Shiu~\cite{Shiu:1980} proved:
let $\funcName:\NN\rightarrow \RR$ be a 
multiplicative function satisfying the following conditions:
\begin{enumerate}
	\item There exist constants $A$ and $0 < \theta <1$ such that
	$\funcName(2^\nu) \leq \exp(A \nu^{\theta})$ where $\nu\geq 1$, and 
	\item for all primes $p$ and all $a\geq 1$ one has
	$\funcName(p^\nu)=\funcName(2^\nu)\geq 1$, then the following holds:
	\[\limsup_{n \rightarrow \infty}
	\frac{\log \funcName(n) \log \log n}{\log n}=\log \max_{\nu\geq 1} (\funcName(2^\nu))^{1/\nu}. \]
\end{enumerate} 

\subsection{On iterates of arithmetic functions}

The quest for the maximal order of the iterated divisor function 
was raised by Ramanujan~\cite{Ramanujan:1915}
in his paper on highly composite numbers.
At the very end of that paper he gave a construction of integers, namely,
$N_k=\prod_{i=1}^k p_i^{p_i-1}$, where $p_i$ denotes the $i$-th prime, and observed that for these integers
$d(d(N_k))\geq 
\exp \parentheses[\big]{ \parentheses{ \sqrt 2 \log 4 +o(1) }
\frac{\sqrt{ \log N_k}}{\log \log N_k} }$ 
holds.
Erd\H{o}s and K\'{a}tai~\cite{ErdoesandKatai:1969}, 
Ivi\'{c}~\cite{Ivic:1995} and Smati 
\cite{Smati:2005, Smati:2008}
gave results on the maximal order, but a satisfying answer
on the maximal order of the iterated divisor function 
was only given almost 100 years after Ramanujan's paper: 
Buttkewitz, Elsholtz, Ford and Schlage-Puchta 
\cite{ButtkewitzElsholtzFord-SP:2012} proved:
\[\limsup_{n \rightarrow \infty}
\frac{\log d(d(n)) \log \log n}{\sqrt{\log n}}=c, \]
where
\begin{equation}\label{eq:B--E--F--S-P:constant}
	c = \parentheses*{ 
		8\sum_{l=1}^{\infty} 
			\parentheses*{ \log \parentheses*{ 1+\frac{1}{l} } }^{2}
	}^{1/2}
	.
\end{equation}
However, it seems that no similar result is known for either of the functions $\delta$ or $r_2$. Neither of these functions is prime independent in the sense defined above, but nonetheless they are still quite similar to $d$ (compare~\cref{eq:r2:formula}). For the latter reason, results concerning $\delta$ and $r_2$ have often been an intuitive next step following results concerning $d$.
In fact, let us recall the development for sums
of multiplicative functions, where Landau investigated the number of integers
representable as sums of two squares. Subsequently, this was
generalized many times, for example to the number of integers 
consisting of primes in certain residue classes only, and eventually led
to the celebrated mean value results of Wirsing and Hal\'{a}sz.

Motivated by this development, we study a class of multiplicative functions
which includes important functions, such as the divisor function $d$, $\delta$, and---more generally---a number of functions connected with counting ideals in quadratic number fields.
In the spirit of Shiu's theorem, we also investigate which hypotheses on the function $\funcName$
and which growth rates of $\funcName(p^\nu)$, depending on $\nu$, 
allow us to bound the maximum order magnitude of $\funcName(\funcName(n))$.
In some cases (including $\delta$, $r_2$ and $\bar{Q}_2$),
we are able to give an asymptotic for the logarithmic size of this maximum.

\subsection{Plan of the paper}
The rest of the paper is structured as follows: first, we present our results in ascending generality.
Results for $\delta$, $r_2$, $\bar{Q}_2$, and some related functions are presented in \cref{sec:delta:and:r2}.
In \cref{sec:AlgNT}, these results are then cast into a more conceptual light from the point of view of basic algebraic number theory.
All of these results follow from general results we describe in \cref{sec:results}.
The rest of the paper deals with supplying all the deferred proofs (which mostly concerns our statements from \cref{sec:results}).

\section{The prototypes: $\delta$, \texorpdfstring{$r_2$}{r₂}, \texorpdfstring{$\bar{Q}_2$}{Q̅₂}, and relatives}\label{sec:delta:and:r2}

The following result is an immediate consequence of~\cref{eq:r2:formula} and \cref{cor:DivisorLike:MaxOrder} below.
\begin{thm}\label{cor:g2:MaxOrder}
	Let $\sumTwoSquares$ be given by~\cref{eq:r2:def}. Then
	\[
	\max_{n\leq x} \log \sumTwoSquares(\sumTwoSquares(n))
	= \frac{\sqrt{\log x}}{\log_2 x} \parentheses*{
		\frac{c}{\sqrt{2}}
		+ \LandauO\parentheses*{\frac{\log_3 x}{\log_2 x}}
	},
	\]
	where $c$ is given in~\cref{eq:B--E--F--S-P:constant}.
\end{thm}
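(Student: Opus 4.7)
The plan is to read off the relevant structure of $\sumTwoSquares$ from~\cref{eq:r2:formula} and then invoke \cref{cor:DivisorLike:MaxOrder}. Formula~\cref{eq:r2:formula} exhibits $\sumTwoSquares$ as a multiplicative function with $\sumTwoSquares(q^\nu)=\nu+1$ on primes $q\equiv 1\bmod 4$, $\sumTwoSquares(p^\nu)\in\{0,1\}$ on primes $p\equiv 3\bmod 4$, and $\sumTwoSquares(2^\nu)=1$. Hence $\sumTwoSquares$ acts exactly as the divisor function on integers supported on $\mathcal{P}_1:=\{q\text{ prime}:q\equiv 1\bmod 4\}$, while contributing at most a factor of one on the remaining prime powers.

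First I would verify that $\sumTwoSquares$ satisfies the hypotheses of \cref{cor:DivisorLike:MaxOrder}, which I anticipate to govern multiplicative functions that behave divisor-like on a prime set $\mathcal{P}$ of positive natural density $\rho$ and are suitably controlled off $\mathcal{P}$. For $\sumTwoSquares$ the relevant set is $\mathcal{P}_1$, which by Dirichlet's theorem (respectively the prime number theorem in arithmetic progressions) has density $\rho=1/2$. The corollary's conclusion then takes the general shape
\[
\max_{n\leq x}\log \sumTwoSquares(\sumTwoSquares(n))
= \frac{\sqrt{\log x}}{\log_2 x}\parentheses*{c\sqrt{\rho}+\LandauO\parentheses*{\frac{\log_3 x}{\log_2 x}}},
\]
and substituting $\rho=1/2$ reproduces the constant $c/\sqrt{2}$ appearing in~\cref{cor:g2:MaxOrder}, with $c$ as in~\cref{eq:B--E--F--S-P:constant}.

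The main obstacle -- already absorbed into \cref{cor:DivisorLike:MaxOrder} and therefore not to be re-proved here -- is twofold. First, one must rule out that extremal configurations of $n$ make substantial use of primes $\equiv 3\bmod 4$: such primes are forced to even exponents (lest $\sumTwoSquares(n)=0$) while contributing only a unit factor to $\sumTwoSquares(n)$, so they purely consume budget in $\log n \leq \log x$ without payoff, and hence the maximizing $n$ is essentially supported on $\mathcal{P}_1$. Second, the density $\rho$ must be shown to enter the leading constant via the quadratic optimization that underlies the B--E--F--S-P constant: since the $k$-th prime in $\mathcal{P}_1$ is asymptotic to $(k\log k)/\rho$ by the prime number theorem in arithmetic progressions, a dilation argument rescales the minimized quadratic form, so that its square root picks up the clean factor $\sqrt{\rho}$ compared to the iterated divisor function situation. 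Once \cref{cor:DivisorLike:MaxOrder} is in hand, the present theorem follows by mere substitution.
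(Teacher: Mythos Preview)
Your proposal is correct and matches the paper's approach: the paper likewise states that \cref{cor:g2:MaxOrder} is an immediate consequence of~\cref{eq:r2:formula} together with \cref{cor:DivisorLike:MaxOrder}, applied with $g(\nu)=\nu+1$ (so $\alpha=1$ and $C_g=c$) and $Q=\{q\text{ prime}:q\equiv 1\bmod 4\}$. The only cosmetic difference is that the paper parametrises by $\ConstQAsympt$ in Assumption~\cref{assumption:qAsymptotic} (here $\ConstQAsympt=2$, giving leading constant $C_g/\sqrt{\ConstQAsympt/\alpha}=c/\sqrt{2}$) rather than by the density $\rho=1/\ConstQAsympt$, but your $c\sqrt{\rho}$ is the same quantity.
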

Incidentally, this implies a result for $r_2$ 
at no additional effort---even though $r_2$ 
is \emph{not} multiplicative:
\begin{cor}\label{cor:MaxOrder:r2}
	The assertion of \cref{cor:g2:MaxOrder} remains valid if $\delta$ is replaced with $r_2$, where $r_2(n) = 4\delta(n) = \#\{ (x,y)\in\ZZ\times\ZZ 
	\mid x^2+y^2=n \}$.
\end{cor}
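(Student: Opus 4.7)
The plan is essentially to reduce the statement for $r_2$ to the one already established for $\delta$ in \cref{cor:g2:MaxOrder} by noting that the two functions differ only by a harmless constant factor and, crucially, that iterating the scaling $r_2 = 4\delta$ does not cause trouble because $\delta$ ignores the prime $2$.

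First I would consult the explicit formula~\cref{eq:r2:formula}. Inspecting this formula, the prime $2$ appears in neither of the two products (it is neither $\equiv 1$ nor $\equiv 3 \pmod 4$), so $\delta(2^k)=1$ for every $k\geq 0$, and hence by multiplicativity $\delta(2^k m) = \delta(m)$ for all odd $m\geq 1$ and all $k\geq 0$. In particular, $\delta(4\ell) = \delta(\ell)$ for every positive integer $\ell$.

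Next I would compute the iterate. Since $r_2(n) = 4\delta(n)$, one has
\[
r_2(r_2(n)) \;=\; 4\delta\bigl(4\delta(n)\bigr) \;=\; 4\delta(\delta(n))
\]
using the observation from the previous step with $\ell = \delta(n)$. Taking logarithms and maximising over $n\leq x$ gives
\[
\max_{n\leq x} \log r_2(r_2(n)) \;=\; \log 4 + \max_{n\leq x} \log \delta(\delta(n)).
\]
Now I would invoke \cref{cor:g2:MaxOrder} for the second summand, yielding the main term $\frac{c}{\sqrt 2}\cdot\frac{\sqrt{\log x}}{\log_2 x}$ with error $\LandauO\bigl(\frac{\sqrt{\log x}\log_3 x}{(\log_2 x)^2}\bigr)$. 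The constant $\log 4$ is trivially absorbed into this error term, since $\sqrt{\log x}\log_3 x/(\log_2 x)^2 \to \infty$ as $x\to\infty$.

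There is essentially no obstacle: the only ingredient beyond \cref{cor:g2:MaxOrder} is the prime-$2$-insensitivity of $\delta$, which is immediate from the product formula. This is why the corollary can be said to follow ``at no additional effort''.
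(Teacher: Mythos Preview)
Your argument is correct and follows essentially the same route as the paper: both proofs establish the identity $r_2(r_2(n)) = 4\,\delta(\delta(n))$ by exploiting that $\delta$ is insensitive to the prime~$2$ (the paper factors $\delta(n)=2^\nu m(n)$ and uses $\delta(2^{2+\nu})=\delta(2^\nu)=1$, whereas you use the equivalent observation $\delta(4\ell)=\delta(\ell)$ directly), and then absorb the additive constant $\log 4$ into the error term from \cref{cor:g2:MaxOrder}.
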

\begin{proof}
	For $n\in\NN$ write $\delta(n) = 2^\nu m(n)$ with some odd integer $m(n)$. Then, using multiplicativity of $\delta$ and $\delta(2^{2+\nu}) = 1 = \delta(2^{\nu})$, we have
	\begin{align*}
		r_2(r_2(n)) &
		= 4\delta(2^{2+\nu} m(n))
		= 4\delta(2^{2+\nu})\delta(m(n)) \\ &
		= 4\delta(2^{\nu})\delta(m(n))
		= 4\delta(2^{\nu}m(n))
		= 4\delta(\delta(n)).
	\end{align*}
	Hence, $\log r_2(r_2(n)) = \log \delta(\delta(n)) + 2\log 2$ and the assertion of the corollary follows from \cref{cor:g2:MaxOrder}.
\end{proof}

It turns out that our arguments are not just limited 
to the binary quadratic form $x^2+y^2$ appearing 
in~\cref{eq:r2:formula}. A more refined explanation 
can be found in the next section. Here we content 
ourselves with stating the next result in a very modest form:
\begin{thm}\label{thm:forms}
	Fix $k\in\braces{2,3,5,11,17,41}$ and let $f(n)$ be defined by either of the following expressions
	\begin{gather}
		\label{eq:QForm:Gaussian}
		\tfrac{1}{4} \#\braces{ (x,y)\in\ZZ^2 \mid x^2 + y^2 = n }, \\
		\label{eq:QForm:SqrtMinusTwo}
		\tfrac{1}{2} \#\braces{ (x,y)\in\ZZ^2 \mid x^2 + 2 y^2 = n }, \\
		\label{eq:QForm:Eisenstein}
		\tfrac{1}{6} \#\braces{ (x,y)\in\ZZ^2 \mid x^2 + x y + y^2 = n }, \\
		\notag
		\tfrac{1}{2} \#\braces{ (x,y)\in\ZZ^2 \mid x^2 + x y + k y^2 = n }
	\end{gather}
	for all positive integers $n$ and put $f(0)=1$.
	Then
	\[
	\max_{n\leq x} \log f(f(n))
	= \frac{\sqrt{\log x}}{\log_2 x} \parentheses*{
		\frac{c}{\sqrt{2}}
		+ \LandauO\parentheses*{\frac{\log_3 x}{\log_2 x}}
	},
	\]
	where $c$ is given in~\cref{eq:B--E--F--S-P:constant}.
\end{thm}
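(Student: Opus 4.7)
The plan is to identify each $\funcName$ in \cref{thm:forms} with the ideal-counting function of a suitable imaginary quadratic field $\numberfieldK$ of class number one, and then invoke \cref{cor:DivisorLike:MaxOrder} exactly as is done for $\sumTwoSquares$ in \cref{cor:g2:MaxOrder}. To this end, I would first note that the forms $x^2+y^2$, $x^2+2y^2$, $x^2+xy+y^2$, and $x^2+xy+ky^2$ with $k\in\braces{2,3,5,11,17,41}$ are the principal reduced binary quadratic forms of discriminants $-4$, $-8$, $-3$, $-7$, $-11$, $-19$, $-43$, $-67$, and $-163$, which together form the complete list of fundamental discriminants of imaginary quadratic fields with class number one. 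In each case, choosing a $\ZZ$-basis of $\integers$ identifies the norm form of $\numberfieldK$ with the given principal binary quadratic form, so pairs $(x,y)\in\ZZ^2$ representing $n$ correspond bijectively to elements $\alpha\in\integers$ of norm $n$; since $\integers$ is a principal ideal domain, such elements correspond to principal ideals up to multiplication by a unit. The normalising factors $1/4$, $1/2$, $1/6$, and $1/2$ appearing in the definition of $\funcName$ are therefore precisely $1/\#\integers^\times$, and we obtain the uniform description
\[
	\funcName(n) = \#\braces*{ \mathfrak{a}\subseteq\integers : \norm(\mathfrak{a}) = n } \quad (n\geq 1).
\]

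With this interpretation in hand, unique factorisation of ideals in $\integers$ yields multiplicativity of $\funcName$ together with the Euler factors
\[
	\funcName(p^{\nu}) =
	\begin{cases}
		\nu+1 & \text{if $p$ splits in $\integers$,} \\
		1 & \text{if $p$ ramifies in $\integers$,} \\
		\tfrac{1}{2}\parentheses*{1+(-1)^\nu} & \text{if $p$ is inert in $\integers$,}
	\end{cases}
\]
with the splitting behaviour of $p$ determined by its residue class modulo the discriminant, so that in particular the split primes form a set of Dirichlet density $1/2$. This is the very same Euler-product shape exhibited by $\sumTwoSquares$ in \cref{eq:r2:formula}: a multiplicative function which agrees with the ordinary divisor function $d$ on a density-$1/2$ set of primes (those which split in $\integers$) and is bounded by $1$ on the remaining primes. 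Consequently $\funcName$ falls within the class of functions addressed by \cref{cor:DivisorLike:MaxOrder}, and the asserted asymptotic for $\max_{n\leq x}\log\funcName(\funcName(n))$ follows without further work.

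The main obstacle here is conceptual and bookkeeping rather than analytic: for each of the nine fields one must verify both the value of $\#\integers^\times$ (to justify the normalising constant) and the trichotomy of primes into split, ramified, and inert. All of the delicate extremal analysis producing the constant $c/\sqrt{2}$ and the error term $\LandauO(\log_3 x / \log_2 x)$ is already encapsulated in \cref{cor:DivisorLike:MaxOrder}, so nothing beyond what was already needed for \cref{cor:g2:MaxOrder} is required.
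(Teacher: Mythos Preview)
Your proposal is correct and follows essentially the same route as the paper: identify each $f$ with the ideal-counting function $f_K$ of an imaginary quadratic field of class number one via the norm form and the unit count, read off the Euler factors, and feed the result into \cref{cor:DivisorLike:MaxOrder}; the paper merely packages that last step as the more general \cref{thm:g2:MaxOrder:numberfield} (valid for arbitrary quadratic $K$) before specialising. One small point of precision: ``Dirichlet density $1/2$'' is weaker than what Assumption~\cref{assumption:qAsymptotic} actually requires, but since you already observe that splitting is governed by residue classes modulo the discriminant, the prime number theorem for arithmetic progressions (cf.\ the paper's \cref{lem:SplittingAsymptotics}) yields the needed $q_j = 2j(\log j + \log_2 j + O(1))$, i.e.\ $\ConstQAsympt=2$.
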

Certainly \cref{thm:forms} implies \cref{cor:g2:MaxOrder} (see~\cref{eq:QForm:Gaussian}).
Moreover, it covers the choice $f=\bar{Q}_2$ with $\bar{Q}_2$ defined in~\cref{eq:RamanujanQbar2} (see~\cref{eq:QForm:Eisenstein}).
\cref{cor:MaxOrder:r2} allows one to drop the factor~$\tfrac{1}{4}$ in~\cref{eq:QForm:Gaussian} and the same trick used for proving \cref{cor:MaxOrder:r2} can also be used to show that one can drop the factor~$\tfrac{1}{2}$ in~\cref{eq:QForm:SqrtMinusTwo}.

\section{Examples from algebraic number theory}\label{sec:AlgNT}
Our next objective is to fit \cref{thm:forms} into a broader context. We begin with some notation.
Let $\numberfieldK$ be an arbitrary number field and $\integers$ its ring of algebraic integers. The \emph{norm} of an ideal $\mathfrak{a}\subseteq\integers$ is denoted by $\norm\mathfrak{a} = \#(\integers/\mathfrak{a}\integers) \in \NN$. Consider
\begin{equation}\label{eq:f_{\numberfieldK}:def}
	f_{\numberfieldK}(n) = \#\braces{ \text{ideals }\mathfrak{a}\subseteq\integers : \norm\mathfrak{a} = n }.
\end{equation}
For two ideals $\mathfrak{a},\mathfrak{b}\subseteq\integers$ their product $\mathfrak{ab}$ is defined element-wise and we have $\norm(\mathfrak{ab}) = (\norm\mathfrak{a}) (\norm\mathfrak{b})$. On combining this with the classical facts that ideals of $\integers$ admit a unique factorisation into prime ideals and that the norm of a prime ideal is always a power of a prime in $\mathbb{N}$, one easily deduces that $f_{\numberfieldK}:\NN_0\to\NN_0$ as defined above is multiplicative.\\
In order to understand the behaviour of $f_{\numberfieldK}$ on prime powers $p^\nu$, we start by considering any ideal $\mathfrak{a}$ with $\norm\mathfrak{a} = p^\nu$. By the multiplicativity of the norm, the factorisation of $\mathfrak{a}$ consists only of prime ideals whose norm is again a power of $p$. From algebraic number theory one knows that the number of prime ideals $\mathfrak{p}\subseteq\integers$ with $p$ dividing $\norm\mathfrak{p}$ is finite. Thus, enumerating the prime ideals with said property by $\mathfrak{p}_1,\ldots,\mathfrak{p}_k$, we may write $\mathfrak{a} = \mathfrak{p}_1^{\nu_1}\cdots \mathfrak{p}_k^{\nu_k}$ for some exponents $\nu_1,\ldots,\nu_k\in\NN_0$. The condition that $\norm\mathfrak{a} = n$ then takes the form
\begin{equation}\label{eq:IdealFactorisation}
	\sum_{j=1}^k \nu_j
	\frac{\log\norm\mathfrak{p}_j}{\log p}
	= \nu.
\end{equation}
On the other hand, this argument also works in the opposite direction. 
Hence, the map
\begin{equation}\label{eq:ExponentCorrespondence}
	\begin{aligned}
		\braces{ (\nu_1,\ldots,\nu_k)\in\NN_0^k : \text{\cref{eq:IdealFactorisation} holds} } &
		\longrightarrow
		\braces{\text{ideals \(\mathfrak{a}\subseteq\integers\) with \(\norm\mathfrak{a} = p^\nu\)}}, \\
		(\nu_1,\ldots,\nu_k) &
		\longmapsto \mathfrak{p}_1^{\nu_1}\cdots \mathfrak{p}_k^{\nu_k}
	\end{aligned}
\end{equation}
is bijective. Moreover, one knows that the principal ideal $(p) = p\integers$ factors as $(p) = \mathfrak{p}_1^{e_1}\cdots \mathfrak{p}_k^{e_k}$, so that
\begin{equation}\label{eq:FundamentalFormula}
	\sum_{j=1}^k e_j \frac{\log\norm\mathfrak{p}_j}{\log p} = \frac{\norm(p)}{\log p} = [\numberfieldK:\QQ],
\end{equation}
where the right hand side is the 
degree of the field extension $\numberfieldK/\QQ$.

If $[\numberfieldK:\QQ]>2$, then~\cref{eq:IdealFactorisation} allows for too much freedom in the choice of the exponents $\nu_1,\ldots,\nu_k$. Consequently, the value of $f_K$ at prime powers $p^\nu$ may depend too loosely on the prime $p$ and the quick growth of $f_K(p^\nu)$ as a function of $\nu$ poses additional problems. The situation becomes appreciably better if $\numberfieldK$ is a quadratic extension of $\QQ$ and we shall henceforth restrict ourselves to this case.
Then, by~\cref{eq:FundamentalFormula}, for any prime $p$, only one of the following three cases may occur:
\begin{enumerate}
	\item $p$ is \emph{ramified}, that is, $(p) = \mathfrak{p}_1^2$ for some prime ideal $\mathfrak{p}_1$ of $\integers$ with $\norm\mathfrak{p}=p$;
	\item $p$ is \emph{split}, that is, $(p) = \mathfrak{p}_1 \mathfrak{p}_2$ with some distinct prime ideals $\mathfrak{p}_1,\mathfrak{p}_2\subseteq\integers$ each having norm $p$;
	\item $p$ is \emph{inert}, that is, $(p) = \mathfrak{p}_1$ is a prime ideal of $\integers$ and $\norm\mathfrak{p}_1 = \norm(p) = p^2$.
\end{enumerate}
>From multiplicativity of $f_{\numberfieldK}$ in combination with the map in~\cref{eq:ExponentCorrespondence} being bijective, it follows that
\begin{equation}\label{eq:f_K:representation}
	f_{\numberfieldK}(n)
	= \prod_{\substack{\text{prime }p\mid n\\ p \text{ ramified}}} 1
	\times \prod_{\substack{\text{prime }q\mid n\\ q \text{ split}}} (\nu_q(n)+1)
	\times \prod_{\substack{\text{prime }p\mid n\\ p \text{ inert}}} \tfrac{1}{2} \parentheses[\big]{ 1+(-1)^{\nu_p(n)} }.
\end{equation}
>From this representation of $f_K(n)$, we see that our results from \cref{sec:results} below can be applied to establish the following:
\begin{thm}\label{thm:g2:MaxOrder:numberfield}
	Suppose that $\numberfieldK$ is some fixed quadratic number field and let $f_{\numberfieldK}$ be given by~\cref{eq:f_{\numberfieldK}:def}. Then
	\[
		\max_{n\leq x} \log f_{\numberfieldK}(f_{\numberfieldK}(n))
		= \frac{\sqrt{\log x}}{\log_2 x} \parentheses*{
			\frac{c}{\sqrt{2}}
			+ \LandauO\parentheses*{\frac{\log_3 x}{\log_2 x}}
		},
	\]
	where the implied constant may depend on $\numberfieldK$, and $c$ is given in~\cref{eq:B--E--F--S-P:constant}.
\end{thm}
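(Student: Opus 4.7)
The plan is to derive this theorem as a direct specialisation of \cref{cor:DivisorLike:MaxOrder} from \cref{sec:results}, the same general result that underlies \cref{cor:g2:MaxOrder}. The work consists in checking that $f_{\numberfieldK}$, whose structure on prime powers is made explicit by~\cref{eq:f_K:representation}, fits into the framework of that corollary.

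First, I would isolate the contribution of ramified primes. Only finitely many primes ramify in $\numberfieldK$ (precisely those dividing $\mathrm{disc}(\numberfieldK/\QQ)$), and in~\cref{eq:f_K:representation} their contribution is constantly $1$. Writing $R$ for the product of the ramified primes, one therefore has $f_{\numberfieldK}(n) = f_{\numberfieldK}(n/\gcd(n,R^\infty))$, so the ramified primes may simply be discarded at the cost of an absolutely bounded, $\numberfieldK$-dependent change in $f_{\numberfieldK}$. They contribute nothing beyond the implicit constant in the error term.

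Second, I would observe that on the remaining (split and inert) primes the functional form of $f_{\numberfieldK}$ is literally identical to that of $\delta$ in~\cref{eq:r2:formula}: split primes contribute $\nu_q(n)+1$ (the ``divisor-like'' factor) while inert primes contribute $\tfrac{1}{2}(1+(-1)^{\nu_p(n)})$ (vanishing unless the exponent is even). This is precisely the shape of multiplicative function covered by \cref{cor:DivisorLike:MaxOrder}; moreover the crude bound $f_{\numberfieldK}(p^\nu) \leq \nu+1$ ensures that any polynomial or subexponential growth hypothesis at prime powers is trivially satisfied.

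Third, the only remaining arithmetic input is the density of split primes. Since $\numberfieldK/\QQ$ is quadratic and abelian, quadratic reciprocity (or, equivalently, Chebotarev in a trivially small case) shows that split and inert primes each have density $\tfrac{1}{2}$ among all rational primes, matching exactly the density that governs $\delta$ via the splitting of primes modulo~$4$ in $\QQ(i)$; this is what yields the leading constant $c/\sqrt{2}$ as in \cref{cor:g2:MaxOrder}. Any standard effective form of Dirichlet's theorem for the associated quadratic character (e.g.\ a Siegel--Walfisz type estimate applied in the construction of the extremal $n$) then suffices to afford the error term $\LandauO(\log_3 x/\log_2 x)$. The main obstacle, accordingly, is not this deduction---essentially mechanical once \cref{eq:f_K:representation} is in hand---but rather the preparation of \cref{cor:DivisorLike:MaxOrder} itself in \cref{sec:results}, which must extract both the precise leading constant and its error term from the general iterated maximal-order problem.
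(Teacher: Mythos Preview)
Your proposal is correct and follows essentially the same route as the paper: both deduce the theorem from \cref{cor:DivisorLike:MaxOrder} by reading off from~\cref{eq:f_K:representation} that $f_{\numberfieldK}$ has the shape~\cref{eq:f:def} with $Q$ the set of split primes and $\innerFunc(\nu)=\nu+1$, and then verifying the density input (your third step) to pin down $\ConstQAsympt=2$. The paper makes this last step explicit via \cref{lem:SplittingAsymptotics}, proving $\pi_Q(x)=\tfrac{1}{2}\tfrac{x}{\log x}(1+\LandauO(1/\log x))$ from the prime number theorem together with a bound on $\sum_{p\le x}\bigl(\tfrac{\Delta}{p}\bigr)$; this is exactly the effective input you allude to, and it is precisely what is needed to secure Assumption~\cref{assumption:qAsymptotic} with the stated error. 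Your separate treatment of ramified primes is unnecessary (they already fall under the $\{0,1\}$ clause of~\cref{eq:f:def}), but harmless.
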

The missing pieces for the proof of the above theorem are given in \cref{sec:thm:g2:MaxOrder:numberfield:proof} below.

In the remainder of this section we briefly sketch how to obtain \cref{thm:forms} from \cref{thm:g2:MaxOrder:numberfield}. To this end, assume that $\numberfieldK$ is some imaginary quadratic number field with class number one. Then the value $f_{\numberfieldK}(n)$ can be viewed as the number of solutions to some binary quadratic equation. Indeed, the assumption about the class number implies that all ideals of $\integers$ are principal and $\numberfieldK$ being imaginary quadratic implies that $\integers$ has finitely many units. Therefore,
\[
f_{\numberfieldK}(n) = \frac{\#\braces{\xi\in\integers : \norm(\xi) = n}}{\#\braces{\text{units in }\integers}}.
\]
The celebrated \emph{Baker--Heegner--Stark theorem} gives a complete classification (up to isomorphism) of all imaginary quadratic 
number fields with class number one.
\Cref{thm:forms} then follows immediately by going through that list, rewriting the norm equation $\norm(\xi)=n$ with respect to some integral basis and applying \cref{thm:g2:MaxOrder:numberfield}; here the choice of the integral basis may affect the particular form one gets, but any such form can be readily checked to be equivalent to one of the ones implicit in \cref{thm:forms} by using the well-known reduction theory for binary quadratic forms.

\section{Hypotheses and the general results}\label{sec:results}
In what follows, we give a description of a class of arithmetic functions
for which the subsequent reasoning works. The imposed restrictions
could be relaxed somewhat, but the model cases we primarily aim at are given in~\cref{eq:r2:formula},~\cref{eq:RamanujanQbar2} and~\cref{eq:f_K:representation}. The important features here are the following:
the arithmetic function $f$ to be iterated is multiplicative,
acts affinely on the exponents of powers of primes $q$ from a
certain subset of primes $Q\subseteq\PP$ (e.g., $\text{primes}\equiv1\bmod4$ in the case $f=\delta$ as seen in~\cref{eq:r2:formula}),
and takes only the values $0,1$ on powers of primes $p\in\PP\setminus Q$
(subject to a rule which---under the assumptions below---turns out to irrelevant).

In~\cite{ButtkewitzElsholtzFord-SP:2012}, the case
$Q=\PP$ with the multiplicative
arithmetic function $d$ acting as $d(p^\nu)=\nu+1$ is studied.
In our approach, we assume that $f(q^\nu)=g(\nu)$ for powers of primes 
$q\in Q$ with a function $g$ satisfying suitable axioms as listed below.
By elaborating on the method of~\cite{ButtkewitzElsholtzFord-SP:2012}, we obtain upper and lower bounds on the
maximal order of first iterates of arithmetic functions $f$ 
which enjoy similar properties as those observed for $d$ 
and $\sumTwoSquares$, see \cref{thm:UpperBound} and \cref{thm:LowerBound}.

In detail, we start with a strictly increasing sequence of primes
$(q_{j})_{j\geq1}$.
By the prime number theorem, the sequence of all primes $(p_{j})_{j\geq1}$ satisfies $p_j=j(\log j+\log (\log j)+\LandauO(1))$, so it seems reasonable to assume similar asymptotics for $(q_{j})_{j\geq1}$ (see~\cref{assumption:qAsymptotic} below).
Set $Q=\{ q_j : j\in\NN \}$ and let
$\langle Q\rangle$ be the monoid (multiplicatively) generated by $Q$.
Furthermore, fix a map $\innerFunc:\NN_0\to\NN$ with $\innerFunc(0)=1$
and let\footnote{The symbol $\innerFunc^{\dagger}$ 
was chosen to allude to a pseudo inverse.}
\begin{equation}
	\innerFunc^{\dagger}(y) = \inf\{ x\in\NN : \innerFunc(x)=y \} \in \NN\cup\braces{+\infty}.
	\label{eq:def:gDagger}
\end{equation}
Finally, assume that
\begin{enumerate}
	\renewcommand{\theenumi}{A.\arabic{enumi}}
	\item \label{assumption:qAsymptotic}
	$(q_{j})_{j\geq1}$ satisfies the asymptotic expansion
	\[
		q_j = \ConstQAsympt j \parentheses{ \log j+\log (\log j) + \LandauO(1) },
	\]
	where $\ConstQAsympt>0$ is some constant,
	\item
	$\innerFunc$ is monotonically increasing,
	\item \label{assumption:fValues}
	$\innerFunc(\NN) \supseteq \langle Q\rangle$,
	\item \label{assumption:fLemma}
	$\innerFunc^{\dagger}(b)+\ConstFLemma b \innerFunc^{\dagger}(a) 
	\leq \innerFunc^{\dagger}(ab)$
	for all $a,b\in\langle Q\rangle$ such that $q_1\leq a\leq b$,
	where $\ConstFLemma > 1/q_1$ is some constant,
	\item \label{assumption:fGrowth}
	$\innerFunc(i)/\innerFunc(i-1) = 1 + \LandauO(i^{-1/2-\epsilon})$
	for some $\epsilon>0$,
	\item \label{assumption:fsublinear}
	$\innerFunc(x)\leq\ConstF x$ for all $x\in\NN$,
	where $\ConstF>0$ is some constant,
	\item \label{assumption:gDaggerGrowth}
	$\innerFunc^{\dagger}(q) = \ConstGDagger q + \LandauO(q/\log q)$ 
	as $Q\ni q\to\infty$, where $\ConstGDagger>0$ is some constant. 
	(Note that $\innerFunc^{\dagger}(q)$ 
	is finite due to~\cref{assumption:fValues}.)
	\end{enumerate}

Now let $\funcName$ be a multiplicative arithmetic function satisfying
\begin{align}
	\funcName(p^{\nu})\begin{cases}
		= \innerFunc(\nu) & \text{if } p\in Q, \\
		\in \{0,1\}	& \text{if } p\notin Q
	\end{cases}
	\label{eq:f:def}
\end{align} for a prime power $p^\nu\geq 1$. 
Furthermore, let $\funcName(0)=1$. We write
\begin{equation}
	M(x) = \max_{n\leq x} \log \funcName(\funcName(n)).
	\label{eq:M:def}
\end{equation}
On writing $\log_k$ for the $k$-fold iterate of the natural logarithm, 
our main results may now be stated as follows:
\begin{thm}\label{thm:UpperBound}
	Let $M$ be as in~\cref{eq:M:def}.
	Then,
	\begin{align}
		M(x) \leq \frac{\sqrt{\log x}}{\log_2 x} \parentheses*{
			\frac{C_g}{\sqrt{\ConstQAsympt\ConstGDagger}}
			+ \LandauO\parentheses*{\frac{\log_3 x}{\log_2 x}}
		},
		\label{eq:thm:UpperBound}
	\end{align}
	where the implied constant depends on $Q,f$ and
	\begin{align}
		C_g = \parentheses[\Bigg]{
			8\sum_{j=1}^{\infty} \parentheses*{
			\log\frac{\innerFunc(j)}{\innerFunc(j-1)}}^2
		}^{1/2}.
		\label{eq:def:C}
	\end{align}
\end{thm}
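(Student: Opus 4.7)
I would adapt the strategy of Buttkewitz, Elsholtz, Ford, and Schlage-Puchta \cite{ButtkewitzElsholtzFord-SP:2012} (which handled the iterated divisor function) to the general multiplicative setting imposed by the hypotheses (A.1)--(A.7). First I would show that the extremal $n$ for $\log f(f(n))$ has a simple form: since $f$ is multiplicative with $f(p^\nu)\in\{0,1\}$ whenever $p\notin Q$, primes outside $Q$ contribute nothing positive to $f(n)$ and may be discarded. Monotonicity of $g$ (from (A.2)) together with increase of $(q_j)_{j\ge1}$ then allow one to take $n = \prod_{j=1}^{J} q_j^{a_j}$ with $a_1\geq a_2\geq\cdots\geq a_J\geq 1$.

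Setting $N := f(n) = \prod_j g(a_j)$ and invoking the same observation once more for $N$ gives
\[
\log f(f(n)) \leq \sum_{q\in Q}\log g\bigl(\nu_q(N)\bigr)
= \sum_{i\geq 1}\log\tfrac{g(i)}{g(i-1)}\,A_i,
\qquad A_i := \#\{q\in Q : q^i\mid N\},
\]
where the second equality is telescoping $\log g(\nu) = \sum_{i=1}^{\nu}\log(g(i)/g(i-1))$ combined with swapping the order of summation.

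The heart of the argument is then to extract, from $n\leq x$, an effective quadratic-type constraint on the sequence $(A_i)$. Iterating (A.4) and using (A.7) should yield $g^\dagger(q^i) \gg (c_*)^{i-1} c_\dagger q^i$, so that each prime $q\in Q$ with $q^i \mid N$ forces some exponent $a_j$ to be at least of order $c_\dagger q^i$. Combined with the budget constraint $\sum_j a_j\log q_j \leq \log x$ and the prime asymptotic $q_l\sim\kappa l\log l$ from (A.1), this should, after an appropriate optimization, give an effective estimate of the form
\[
\sum_i A_i^2 \ll \frac{\log x}{\kappa\,c_\dagger\,(\log_2 x)^2} + (\text{lower order}).
\]
A Cauchy--Schwarz inequality then produces
\[
\Bigl(\sum_i A_i\log\tfrac{g(i)}{g(i-1)}\Bigr)^2
\leq \Bigl(\sum_i A_i^2\Bigr)\cdot \sum_i\bigl(\log\tfrac{g(i)}{g(i-1)}\bigr)^2,
\]
and after truncating the tail $i>I(x)$ (where (A.5) ensures sufficient decay of $\log(g(i)/g(i-1))$) and invoking the definition $C_g^2 = 8\sum_i(\log(g(i)/g(i-1)))^2$, one recovers the claimed bound with constant $C_g/\sqrt{\kappa c_\dagger}$.

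The main obstacle, I expect, is the derivation of the quadratic constraint on $(A_i)$: axioms (A.4) and (A.7) control $g^\dagger$ only on single prime or prime-power arguments, while the prime factorization of $N = \prod_j g(a_j)$ involves multiple $g(a_j)$'s which can jointly contribute to a given $\nu_q(N)$. Balancing these competing contributions against the budget $\sum_j a_j\log q_j\leq\log x$ so as to reproduce the sharp constant $\sqrt{\kappa c_\dagger}$ in the denominator, while simultaneously tracking the lower-order errors from (A.1) and (A.7) uniformly enough to yield the claimed $O(\log_3 x/\log_2 x)$ secondary term, is likely where the bulk of the technical work will lie.
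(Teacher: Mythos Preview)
Your high-level strategy matches the paper's: reduce to $n$ supported on $Q$, write $\log f(f(n))$ as $\sum_i A_i\log(g(i)/g(i-1))$, apply Cauchy--Schwarz, and extract a quadratic constraint $\sum_i A_i^2 \ll \log x/(\kappa c_\dagger(\log_2 x)^2)$ from the budget $n\le x$. The paper's Lemma~6.2 is exactly your Cauchy--Schwarz step, and its Proposition~7.1 is exactly your quadratic constraint.

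What you are missing is the mechanism that actually delivers the quadratic constraint, and you have correctly located the difficulty: if $q^i\mid N=\prod_j g(a_j)$, there is no reason a \emph{single} $a_j$ must be large, since many $g(a_j)$'s can each contribute one factor of $q$. The paper resolves this not by analysing $n$ itself but by passing to the \emph{minimal} preimage $m_N=q_1^{\alpha_1}\cdots q_r^{\alpha_r}$ of $N$ (and of divisors $N',N''\mid N$). For this minimal $m_N$, axiom~(A.4) is used to prove a structural lemma (Lemma~6.1(3)): whenever $q_j>q_{r+1}^{1/(c_* q_1^k)}$ one has $\Omega(g(\alpha_j))\le k$. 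In particular, $g(\alpha_i)$ is \emph{prime} for all $i>r^\theta$ with some fixed $\theta<1$. This collapses the troublesome many-to-one contribution and sets up a near-bijection between the exponents $\alpha_i$ (for $i>r^\theta$) and the prime divisors $v_j$ of $N$: one gets $g(\alpha_i)=v_j$ on explicit ranges of $i$, hence $\alpha_i=g^\dagger(v_j)\sim c_\dagger v_j$ by~(A.7), and summing $\alpha_i\log q_i$ with~(A.1) produces the sharp lower bound $\log m_N\gtrsim \tfrac{1}{4}\kappa c_\dagger(\log_2 n)^2\sum_j j a_j$.

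One further device in the paper that your sketch does not anticipate: before carrying out the above, the paper splits $N=N'N''$ according to whether the exponent of each prime exceeds $(\log_2 n)^K$. The small-exponent piece $N''$ gives the main term via Proposition~7.1; the large-exponent piece $N'$ is handled by the \emph{second} inequality of Lemma~6.2 (which exploits~(A.5) to show such primes contribute only $O(\sqrt{\log n\cdot\log_3 n}/(\log_2 n)^2)$). Your suggested ``truncate the tail $i>I(x)$'' is in the same spirit but would need to be done at the level of $N$, not at the level of the Cauchy--Schwarz sum, to interact correctly with the minimal-preimage argument.
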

Throughout this paper, $C_g$ always denotes
the constant defined in~\cref{eq:def:C}.
We also note in passing that throughout all implied constants 
may depend on the function $\funcName$ and the set $Q$ 
and an $\epsilon$, where obvious.
\begin{thm}\label{thm:LowerBound}
	Letting $\innerFunc(\nu) = \alpha\nu + 1$ and assuming the above hypotheses,
	the following holds
	\begin{align}
		M(x) \geq \frac{\sqrt{\log x}}{\log_2 x} \parentheses*{
			\frac{C_g}{\sqrt{\ConstQAsympt/\alpha}}
			+ \LandauO\parentheses*{\frac{\log_3 x}{\log_2 x}}
		}.
		\label{eq:thm:LowerBound}
	\end{align}
\end{thm}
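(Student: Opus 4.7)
The plan is to construct an explicit $n \leq x$ realising the claimed lower bound, following the approach of Buttkewitz, Elsholtz, Ford, and Schlage-Puchta in~\cite{ButtkewitzElsholtzFord-SP:2012}. Given a parameter $L$ and non-increasing nonnegative integers $k_1 \geq k_2 \geq \dots \geq k_L$ (to be chosen later), we set $m := \prod_{l=1}^L q_l^{k_l} \in \langle Q\rangle$. Because $\innerFunc(\nu) = \alpha\nu + 1$, \cref{assumption:fValues} forces $\innerFunc^\dagger(q_l) = (q_l-1)/\alpha$ to be a positive integer for every $l$. Listing the prime factors of $m$ with multiplicity in decreasing order as $y_1 \geq y_2 \geq \dots \geq y_B$, where $B = \sum_l k_l$, we define
\[
n \;:=\; \prod_{i=1}^B q_i^{\innerFunc^\dagger(y_i)}.
\]
Since the $q_i$ are pairwise distinct primes in $Q$, multiplicativity of $\funcName$ gives $\funcName(n) = \prod_{i=1}^B \innerFunc(\innerFunc^\dagger(y_i)) = \prod_{i=1}^B y_i = m$, whence
\[
\log \funcName(\funcName(n)) \;=\; \log \funcName(m) \;=\; \sum_{l=1}^L \log(\alpha k_l + 1).
\]

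Next, to bound $\log n = \sum_{i=1}^B \innerFunc^\dagger(y_i) \log q_i$, we appeal to \cref{assumption:qAsymptotic} and to the exact expression $\innerFunc^\dagger(q_l) = (q_l-1)/\alpha \sim (\ConstQAsympt/\alpha)\,l\log l$; note that the pairing induced by the sorting above (large exponents to small primes $q_i$) is optimal by the rearrangement inequality. Introducing the dual variables $N_j := \#\{l : k_l \geq j\}$, the constraint and the objective become
\[
\log n \;\leq\; \frac{\ConstQAsympt}{2\alpha}\,(\log B)\sum_{j\geq 1} N_j^2 \log N_j\,(1+o(1)),
\qquad
\log\funcName(\funcName(n)) \;=\; \sum_{j\geq 1} N_j\log\frac{\innerFunc(j)}{\innerFunc(j-1)}.
\]
Cauchy--Schwarz bounds the right-hand sum above by $(\sum_j N_j^2)^{1/2}\cdot C_g/\sqrt{8}$, with equality saturated in the continuous relaxation by the prescription $N_j = c\,\log(\innerFunc(j)/\innerFunc(j-1))$ for a scaling parameter $c$. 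Under this prescription, the effective cutoff $N_j \geq 1$ occurs near $j\sim c$, one has $B\sim c\log c$, and $\log N_j \sim \log c$ uniformly on the bulk, so $\log B\sim\log c$. Solving the constraint $\log n\leq\log x$ then forces $c\asymp\sqrt{\alpha\log x/\ConstQAsympt}/\log_2 x$, and back-substituting into the objective produces the leading constant $C_g/\sqrt{\ConstQAsympt/\alpha}$ asserted in~\cref{eq:thm:LowerBound}.

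The principal technical obstacle is the careful propagation of error terms: \cref{assumption:qAsymptotic} enters both $\log q_i$ and $\innerFunc^\dagger(q_l)$, the block-wise summation used in the upper bound on $\log n$ introduces an intermediate error, and the continuous optimum for $N_j$ must be discretised to integers (which in turn forces the $k_l$ to be integers). These errors have to collectively collapse into the advertised $O(\log_3 x/\log_2 x)$ relative correction, rather than a coarser term. Verifying $n \leq x$ rather than merely $\log n\leq\log x$ up to an absolute constant also requires a mild adjustment of the cutoff $L$; this final step is routine once the main estimation is in place.
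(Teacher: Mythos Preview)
Your outline is correct and follows the same route as the paper: both construct $n=\prod_{i} q_i^{\,g^\dagger(y_i)}$ so that $f(n)=\prod_{l} q_l^{k_l}$, and then choose the exponents $k_l$ (equivalently your dual counts $N_j$, which are precisely the paper's $y_i$) so as to saturate the Cauchy--Schwarz inequality of \cref{lem:Cauchy-Schwarz}. The only differences are presentational --- the paper fixes the near-optimal choice $\nu_j=\lfloor 1-\alpha^{-1}+((\alpha+1)^{j/t}-1)^{-1}\rfloor$ explicitly at the outset and verifies the $O(\log_3 x/\log_2 x)$ error directly rather than deriving the choice from the continuous optimisation you sketch, and it (harmlessly) pairs small exponents with small primes instead of using your rearrangement-optimal assignment.
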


Upon combining \cref{thm:UpperBound} 
and \cref{thm:LowerBound}, 
we immediately deduce the following corollary:
\begin{cor}\label{cor:DivisorLike:MaxOrder}
	Letting $\innerFunc(\nu) = \alpha\nu + 1$
	for some $\alpha\in\NN$, and on the above hypotheses, it holds that
	\[
	M(x)
	= \frac{\sqrt{\log x}}{\log_2 x} \parentheses*{
		\frac{C_g}{\sqrt{\ConstQAsympt/\alpha}}
		+ \LandauO\parentheses*{\frac{\log_3 x}{\log_2 x}}
	}.
	\]
\end{cor}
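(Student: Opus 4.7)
The plan is simply to sandwich $M(x)$ between the bounds supplied by \cref{thm:UpperBound} and \cref{thm:LowerBound} and then check that the leading constants coincide when $\innerFunc(\nu)=\alpha\nu+1$. Since the lower bound in \cref{thm:LowerBound} is already stated in the target form $C_g/\sqrt{\ConstQAsympt/\alpha}$, the only thing that needs to be verified is the value of the constant $\ConstGDagger$ appearing in hypothesis~\cref{assumption:gDaggerGrowth} for this specific $\innerFunc$.

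Because $\innerFunc$ is strictly monotone on $\NN_{0}$, every $y\in\innerFunc(\NN)$ has a unique preimage, namely $\innerFunc^{\dagger}(y)=(y-1)/\alpha$. By hypothesis~\cref{assumption:fValues}, every prime $q\in Q$ belongs to $\innerFunc(\NN)\supseteq\langle Q\rangle$, so
\[
	\innerFunc^{\dagger}(q)
	= \tfrac{q-1}{\alpha}
	= \tfrac{1}{\alpha}\,q + \LandauO(1)
	= \tfrac{1}{\alpha}\,q + \LandauO\parentheses*{q/\log q}
\]
as $Q\ni q\to\infty$. Consequently, $\ConstGDagger=1/\alpha$ in~\cref{assumption:gDaggerGrowth}, and the upper-bound constant $C_g/\sqrt{\ConstQAsympt\ConstGDagger}$ provided by \cref{thm:UpperBound} equals $C_g/\sqrt{\ConstQAsympt/\alpha}$, which is exactly the lower-bound constant from \cref{thm:LowerBound}.

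Combining the two estimates then yields the claimed asymptotic for $M(x)$ directly; the error term of size $\LandauO(\log_3 x/\log_2 x)$ is preserved because both bounds share it. There is no genuine obstacle at this stage: the entire difficulty has been pushed into \cref{thm:UpperBound} and \cref{thm:LowerBound}, and the corollary amounts only to matching the constants $\ConstGDagger$ and $1/\alpha$ in the two bounds.
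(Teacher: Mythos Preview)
Your proposal is correct and follows exactly the approach the paper intends: the corollary is stated right after \cref{thm:UpperBound} and \cref{thm:LowerBound} with the remark that it follows ``upon combining'' them, and the only missing computation is precisely the identification $\ConstGDagger=1/\alpha$ that you supply. Your justification of that value via $\innerFunc^{\dagger}(q)=(q-1)/\alpha$ for $q\in Q\subseteq\innerFunc(\NN)$ is clean and accurate.
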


We note in passing that, for $Q=\PP$ in the setting of \cref{cor:DivisorLike:MaxOrder}, the function $\funcName$ in~\cref{eq:f:def} arises naturally as number of divisors of monic monomials, i.e., $\funcName(n) = d(n^\alpha)$.

\section{Proof of \texorpdfstring{\cref{thm:g2:MaxOrder:numberfield}}{Theorem\autoref{thm:g2:MaxOrder:numberfield}}}\label{sec:thm:g2:MaxOrder:numberfield:proof}
We assume the notation of \cref{thm:g2:MaxOrder:numberfield} and recall~\cref{eq:f_K:representation}.
In order to use \cref{cor:DivisorLike:MaxOrder} to obtain an asymptotic formula for
\[
	\max_{n\leq x} \log f_{\numberfieldK}(f_{\numberfieldK}(n)),
\]
it only remains to verify Assumption~\cref{assumption:qAsymptotic}, where $q_j$ therein is taken to be the $j$-th smallest prime which splits in $\numberfieldK$.
The next lemma furnishes a prime number theorem for such primes and can be used to verify that Assumption~\cref{assumption:qAsymptotic} holds with $\ConstQAsympt=\frac{1}{2}$ and thereby finishes the proof of \cref{thm:g2:MaxOrder:numberfield}.
\begin{lem}\label{lem:SplittingAsymptotics}
	Suppose that $\numberfieldK$ is some fixed quadratic number field. Then
	\[
	\#\braces{ \text{primes } q\leq x \text{ which are split in } \numberfieldK } = \frac{1}{2} \frac{x}{\log x}(1+\LandauO(1/\log x))
	\]
	as $x\to\infty$.
\end{lem}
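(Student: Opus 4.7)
The plan is to reduce the splitting condition to a congruence condition and then invoke the prime number theorem for arithmetic progressions.

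First I would recall the classical splitting law in quadratic number fields: if $\numberfieldK=\QQ(\sqrt{d})$ has discriminant $D$, then an unramified prime $p$ splits in $\numberfieldK$ if and only if the Kronecker symbol $\chi_D(p)=\left(\tfrac{D}{p}\right)$ equals $+1$, is inert if $\chi_D(p)=-1$, and ramifies if $\chi_D(p)=0$ (which happens for only finitely many $p$, namely those dividing $D$). The key point is that $\chi_D$ is a non-trivial Dirichlet character modulo $|D|$, so the condition ``$p$ splits in $\numberfieldK$'' is equivalent to $p$ lying in a union of residue classes modulo $|D|$, and the orthogonality of characters (applied to $\chi_D$ and the trivial character mod $|D|$) shows that exactly half of the residue classes coprime to $|D|$ satisfy $\chi_D(a)=+1$.

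Next I would invoke a quantitative prime number theorem for arithmetic progressions. Since $\numberfieldK$ is fixed, $|D|$ is a fixed constant, and it suffices to use the classical form
\[
    \pi(x;|D|,a) = \frac{1}{\varphi(|D|)}\frac{x}{\log x}\parentheses*{1 + \LandauO\parentheses*{\frac{1}{\log x}}}
\]
valid for $\gcd(a,|D|)=1$, with implied constant depending on $|D|$. Summing over the $\varphi(|D|)/2$ residue classes $a$ with $\chi_D(a)=1$ yields
\[
    \#\braces{\text{split primes } q \leq x} = \frac{\varphi(|D|)/2}{\varphi(|D|)} \cdot \frac{x}{\log x}\parentheses*{1+\LandauO\parentheses*{\frac{1}{\log x}}} + \LandauO(1),
\]
where the $\LandauO(1)$ absorbs the finitely many ramified primes. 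This simplifies to the claimed asymptotic.

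There is no serious obstacle here: the splitting law is standard, and the required form of the prime number theorem for arithmetic progressions (with a modulus that is a fixed constant) is classical. The only minor care point is to note that the error $\LandauO(1/\log x)$ in the asymptotic is uniform for the fixed modulus $|D|$, so the implied constant legitimately depends only on $\numberfieldK$, consistent with the use of the lemma in the proof of \cref{thm:g2:MaxOrder:numberfield} (where the dependence on $\numberfieldK$ was explicitly allowed).
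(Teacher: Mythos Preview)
Your proof is correct and follows essentially the same approach as the paper: both reduce the splitting condition to the value of the Kronecker symbol $\chi_D$ and then apply a quantitative prime counting result. The only cosmetic difference is that the paper writes the indicator of splitting as $\tfrac{1}{2}(1+\chi_D(q))$ and bounds the resulting character sum over primes directly, whereas you partition into residue classes modulo $|D|$ and apply the prime number theorem for arithmetic progressions to each class; for a fixed modulus these two routes are equivalent.
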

The above lemma is certainly well-known. Nevertheless, we sketch a proof for the convenience of the reader.
\begin{proof}[Proof of \cref{lem:SplittingAsymptotics}]
	By~\cite[Proposition~13.1.3]{Ireland-Rosen:1990} there is some integer $\Delta$ such that
	\begin{align*}
	\MoveEqLeft
	\#\braces{ \text{primes } q\leq x \text{ which are split in } \numberfieldK } \\ &
	= \#\braces*{ \text{primes } q\leq x \text{ with } \parentheses*{\frac{\Delta}{q}}=1 } + \LandauO(d(\Delta)) \\ &
	= \frac{1}{2} \#\braces{ \text{primes } p\leq x} + \frac{1}{2}\sum_{\text{primes }q\leq x} \parentheses*{\frac{\Delta}{q}} + \LandauO(d(\Delta))
	\end{align*}
	where $\parentheses[\big]{\tfrac{\Delta}{q}}$ denotes the Kronecker symbol. Consequently, the assertion of the lemma then follows from the prime number theorem and~\cite[Corollary~5.29]{Iwaniec-Kowalski:2004} (see also~\cite[Exercise~6 in~§3.5]{Iwaniec-Kowalski:2004}).
\end{proof}

\section{Notation and auxiliary results}
\subsection{Notation}
At this point it is convenient to introduce some
additional notation used throughout the rest of the paper.
Let
\begin{itemize}
	\item $\Omega(n)=\sum_{p\mid n} \nu_p(n)$, $\omega(n)=\sum_{p\mid n} 1$,
	\item $
		\Pi_{Q}(n)
		= \max\{ m\in\langle Q\rangle : m\mid n \}
	$,
	\item $\Omega_{Q} = \Omega\circ\Pi_{Q}$
	\item $\omega_{Q} = \omega\circ\Pi_{Q}$,
	\item $\pi_{Q}(x) = \#\{ q\in Q : q\leq x \}$.
\end{itemize}

\subsection{Auxilliary results}
We would like to give the reader our perspective on the problem at hand.
In order to keep the notation simple, let $Q=\PP$ for the moment.
Then, for any positive integer $n$,
\[
	\log \funcName(\funcName(n)) =
	\sum_{\substack{ q\in Q \\ q \mid \funcName(n) }}
	\log \innerFunc \parentheses[\big]{ q^{\nu_q(\funcName(n))} }.
\]
Vaguely speaking, in order to give estimates on $M(x)$, one needs
to exhibit some control over the prime factors of integers $N$,
which appear as values $N=\funcName(n)$ for $n\leq x$.
This sort of control is provided by \cref{lem:m_N}.

Additionally, one might like to remove~$\innerFunc$ from the above sum and
perhaps also take advantage of the fact that (weighted) sums of
$\nu_q(\funcName(N))$ over~$q$ are more readily controlled than values of
$\nu_q(\funcName(N))$ for some individual~$q$. \cref{lem:Cauchy-Schwarz}
makes this happen and is the source of the main term in
\cref{thm:UpperBound} and \cref{thm:LowerBound}.

Finally, \cref{lem: upper estimate for maximal order of f}
is a technical tool used to handle the case when $N=\funcName(n)$ does not
have sufficiently many prime factors~$q$ with small exponent~$\nu_q(N)$.

\begin{lem}\label{lem:m_N}
	For an $N \in \langle Q\rangle$, 
	let $m_{N}$ be the least positive integer $m$
	such that $\funcName(m)=N>1$. Then the following assertions hold:
	\begin{enumerate}
		\item \label{lem:m_N:factorisation}
		The number $m_N$ factors as $m_N = q_1^{\nu_1} \cdots q_r^{\nu_r}$ with some $r\geq 1$, exponents
		$\nu_1 \geq \ldots \geq \nu_r$ and the primes $q_1,q_2,\ldots$ from \cref{sec:results}.
		\item \label{lem:m_N:division}
		If $N'$ divides $N$, then $m_{N'} \leq m_N$.
		\item \label{lem:m_N:OmegaEstimate}
		If $q_j > q_{r+1}^{1/s_k}$ for some
		$j \leq r$, then $\Omega(\innerFunc(\nu_j)) \leq k$,
		where $s_k = \ConstFLemma q_1^k$.
	\end{enumerate}
\end{lem}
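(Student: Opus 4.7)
The lemma splits into three assertions which I plan to tackle in sequence, exploiting throughout the fact that $f(q^{\nu})=g(\nu)$ depends only on $\nu$ when $q\in Q$ whereas $f(p^\nu)\in\{0,1\}$ for $p\notin Q$; this insensitivity forces the extremal $m_N$ to be built from the smallest primes of $Q$ in a rigid fashion. For part (i), I would first observe that no prime $p\notin Q$ can appear in $m_N$: any such factor either produces $f(m_N)=0$ (excluded by $N>1$) or contributes the multiplicatively neutral $1$, so it can be deleted. Hence $m_N=\prod_l q_{i_l}^{\mu_l}$ is supported on $Q$, and because $f$ is indifferent to which primes of $Q$ carry which exponents, the index set must be exactly $\{1,\ldots,r\}$ (otherwise swapping a used large $q_{i_l}$ for a smaller unused $q_s$ decreases $m_N$); the rearrangement inequality then forces $\mu_1\geq\ldots\geq\mu_r$.

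For part (ii), by iterating it suffices to treat the case $N=qN'$ with $q\in Q$. Writing $N=g(\nu_1)\cdots g(\nu_r)$ via part (i), the prime $q$ divides some factor $g(\nu_i)$; Assumption~\cref{assumption:fValues} then guarantees that $\mu=g^\dagger(g(\nu_i)/q)$ is well-defined, and monotonicity of $g$ gives $\mu<\nu_i$. Replacing $\nu_i$ by $\mu$ in $m_N$ produces a witness $m'<m_N$ for $f(m')=N'$, so $m_{N'}\leq m'<m_N$.

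Part (iii) is the technical heart and I would argue by contradiction: suppose $\Omega(g(\nu_j))\geq k+1$. Let $a$ be the smallest prime factor of $g(\nu_j)$ and set $b=g(\nu_j)/a$. Then $a,b\in\langle Q\rangle$, $a\geq q_1$, and $b$ has at least $k$ prime factors each $\geq a$, so $b\geq\max(a,q_1^k)$. Assumption~\cref{assumption:fLemma} then yields
\[
	\nu_j \;=\; g^\dagger(ab) \;\geq\; g^\dagger(b) + \ConstFLemma\, b\, g^\dagger(a).
\]
The key replacement is to substitute the block $q_j^{\nu_j}$ in $m_N$ by $q_j^{g^\dagger(b)}q_{r+1}^{g^\dagger(a)}$, using the smallest unused prime $q_{r+1}\in Q$ (available by part (i)). Multiplicativity keeps the $f$-value unchanged, and comparing logarithms gives
\[
	\log m_N - \log m^\star \;\geq\; g^\dagger(a)\bigl(\ConstFLemma b\log q_j - \log q_{r+1}\bigr).
\]
The hypothesis $q_j>q_{r+1}^{1/s_k}$ combined with $\ConstFLemma b\geq \ConstFLemma q_1^k = s_k$ makes the right-hand side strictly positive (noting that $g^\dagger(a)\geq 1$ since $a>1=g(0)$), contradicting the minimality of $m_N$.

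The hardest step will be the delicate choice in part (iii): the splitting $g(\nu_j)=ab$ must simultaneously satisfy $q_1\leq a\leq b$ (so that Assumption~\cref{assumption:fLemma} is applicable) and $b\geq q_1^k$ (so that the gain absorbs the cost of introducing the new prime $q_{r+1}$). This tension is resolved by taking $a$ to be the smallest prime factor of $g(\nu_j)$, which secures both inequalities at once (at least for $k\geq 1$). The other two parts are chiefly organisational, following once one recognises that the obliviousness of $f$ to the identity of primes in $Q$ reduces the problem to questions about ordered exponent sequences.
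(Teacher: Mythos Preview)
Your proof is correct and follows essentially the same line as the paper's: the same swapping argument for~(i), the same use of $g^\dagger$ and monotonicity of $g$ for~(ii), and the identical replacement construction $m^\star = q_j^{g^\dagger(b)} q_{r+1}^{g^\dagger(a)} \prod_{i\neq j} q_i^{\nu_i}$ for~(iii). The only variations are cosmetic --- your inductive reduction to $N = qN'$ in~(ii) versus the paper's one-shot decomposition $N' = \prod_j N_j'$ with $N_j' \mid g(\nu_j)$, and your explicit choice of $a$ as the smallest prime factor in~(iii), which the paper leaves implicit when invoking Assumption~\ref{assumption:fLemma}.
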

\begin{proof}
	Pick some $p\notin Q$ and let $\nu=\nu_p(m_N)$. Then 
	$1<N=\funcName(m_N)=\funcName(p^\nu) \funcName(m_N/p^\nu)$, so
	that $m_N=m_N/p^\nu$. Hence, $\nu=0$ and $p\nmid m_N$.
	Now, writing $m_N = q_1^{\nu_1} \cdots q_r^{\nu_r}$,
	note that one can permute the exponents without changing the
	value under $\funcName$. Therefore, by minimality of $m_N$, we must
	have $\nu_1 \geq \ldots \geq \nu_r$.
	This proves~\cref{lem:m_N:factorisation}.
	
	Turning to~\cref{lem:m_N:division}, if we write 
	$m_N = q_1^{\nu_{1}}\cdots q_r^{\nu_r}$,
	then, by~\cref{eq:f:def},
	\begin{equation}\label{eq:N:g:factorisation}
		N = f(m_N) = \prod_{j\leq r} \innerFunc(\nu_j),
	\end{equation}
	and since $N'\mid N$ there is a partition
	$\nu_k = \nu_{k,1} + \ldots + \nu_{k,r}$
	such that
	$N_j' = \prod_{k\leq s} q_{k}^{\nu_{k,j}} \mid \innerFunc(\nu_j)$.
	By Assumption~\cref{assumption:fValues} on $\innerFunc$,
	the value $N_j'$ is attained by $\innerFunc$.
	Hence, we may look at $m_* = q_{1}^{\nu_1'}\cdots q_{r}^{\nu_r'}$,
	where $\nu_j' = \innerFunc^{\dagger}(N_j')$.
	Clearly, $\funcName(m_*)=N'$, and, by monotonicity of $\innerFunc$,
	$\nu_j'\leq\nu_{j}$, so that
	$m_{N'} \leq m_* \leq m_N$.
	
	To prove~\cref{lem:m_N:OmegaEstimate}, let us assume for the
	sake of contradiction that $q_{j}>q_{r+1}^{1/s_{k}}$,
	$\Omega(\innerFunc(\nu_{j}))>k$.
	Using~\cref{eq:N:g:factorisation} and $N\in\langle Q\rangle$, we have $\Omega_{Q}(\innerFunc(\nu_{j})) = \Omega(\innerFunc(\nu_{j})) > k$, so that there is a decomposition $\innerFunc(\nu_{j}) = ab$, where
	$a\geq q_1$, $b\geq q_1^{k}$.
	Recalling the definition of $\innerFunc^{\dagger}$ in~\cref{eq:def:gDagger} and using that both $a$ and $b$ are contained in $\langle Q\rangle$, we see that both $\innerFunc^{\dagger}(a)$ and $\innerFunc^{\dagger}(b)$ are finite and we may consider
	\[
		m^*
		= q_j^{\innerFunc^{\dagger}(b)} q_{r+1}^{\innerFunc^{\dagger}(a)}
		\prod_{\substack{i=1 \\ i\neq j}}^{r} q_i^{\nu_i}.
	\]
	Since~\cref{assumption:fLemma} implies
	\[
		\innerFunc^{\dagger}(b)-\nu_j
		\leq \innerFunc^{\dagger}(b)-
		\innerFunc^{\dagger}(ab)
		= \innerFunc^{\dagger}(b)\parentheses*{
			1
			- \frac{\innerFunc^{\dagger}(ab)}
				   {\innerFunc^{\dagger}(b)}
		}
		\leq - \ConstFLemma \innerFunc^{\dagger}(a) b,
	\]
	which, by assumption, is $\leq - \ConstFLemma q_1^{k}$, we infer
	\[
		\frac{m^*}{m_N}
		= q_j^{\innerFunc^{\dagger}(b)-\nu_j}
		q_{r+1}^{\innerFunc^{\dagger}(a)}
		\leq 
		q_j^{-\ConstFLemma \innerFunc^{\dagger}(a) b} 
		q_{r+1}^{\innerFunc^{\dagger}(a)}.
	\]
	However, this shows that $m^{*}<m_{N}$, which contradicts the
	definition of $m_{N}$, for we have
	\[
		\funcName(m^*)
		= \underbrace{ \innerFunc(
		\innerFunc^{\dagger}(b)) \innerFunc(\innerFunc^{\dagger}(a)) }_{ {}
		= ab = \innerFunc(\nu_{j}) } 
		\prod_{\substack{i=1 \\ i\neq j}}^{r} \innerFunc(\nu_i)
		= \prod_{i\leq r} \innerFunc(\nu_i)
		= N.
	\]
	Hence, we conclude that $\Omega_{Q}(\innerFunc(\nu_{j})) \leq k$.
\end{proof}

\begin{lem}\label{lem:Cauchy-Schwarz}
	Let $\nu_1,\ldots,\nu_t$ be positive integers. Then
	\begin{equation}
		\sum_{j\leq t} \log \innerFunc(\nu_j)
		\leq \frac{C_g}{2} \parentheses[\bigg]{ \sum_{j\leq t}j\nu_j }^{1/2},
		\label{eq:CS:ineq}
	\end{equation}
	where $C_g$ is given by~\cref{eq:def:C}.
	If additionally $\nu_t \geq \nu$, then
	\[
		\sum_{j\leq t}\log \innerFunc(\nu_j)
		\ll \sqrt{\frac{1}{\nu^{2\epsilon}}
			+ \frac{(\log \innerFunc(\nu))^2}{\nu}
		} \parentheses[\bigg]{ \sum_{j\leq t}j\nu_j }^{1/2},
	\]
	with $\epsilon$ from~\cref{assumption:fGrowth}.
\end{lem}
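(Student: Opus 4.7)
My plan is to telescope and then apply Cauchy--Schwarz once. Setting $\delta_k := \log(\innerFunc(k)/\innerFunc(k-1))$ for $k \geq 1$, the identity $\log \innerFunc(\nu_j) = \sum_{k=1}^{\nu_j} \delta_k$ (valid because $\innerFunc(0)=1$) allows one to swap the order of summation:
\[
	\sum_{j \leq t} \log \innerFunc(\nu_j) = \sum_{k \geq 1} N(k)\,\delta_k,
	\qquad N(k) := \#\{ j \leq t : \nu_j \geq k \}.
\]
Since the left-hand side is symmetric in the $\nu_j$, whereas by the rearrangement inequality $\sum_j j\nu_j$ is \emph{minimised} when the $\nu_j$ are arranged in decreasing order, it suffices to prove both bounds under the convention $\nu_1 \geq \cdots \geq \nu_t$. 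Under this convention $N(k)$ equals the largest index $j$ for which $\nu_j \geq k$, so
\[
	\sum_{j \leq t} j \nu_j = \sum_{k \geq 1} \sum_{\substack{j \leq t \\ \nu_j \geq k}} j = \sum_{k \geq 1} \frac{N(k)(N(k)+1)}{2} \geq \frac{1}{2} \sum_{k \geq 1} N(k)^2.
\]

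The first bound~\cref{eq:CS:ineq} then follows immediately from Cauchy--Schwarz applied to $\sum_k N(k)\delta_k$, paired with the crucial inequality $\sum_k N(k)^2 \leq 2\sum_j j\nu_j$ and the identity $\sum_k \delta_k^2 = C_g^2/8$ (from the definition of $C_g$). That $C_g$ is finite in the first place is guaranteed by Assumption~\cref{assumption:fGrowth}, which forces $\delta_k = \LandauO(k^{-1/2-\epsilon})$ and hence convergence of $\sum_k \delta_k^2$.

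For the sharper second bound, the extra hypothesis $\nu_t \geq \nu$ implies (under the decreasing convention) that every $\nu_j \geq \nu$, and hence $N(k) = t$ for all $k \leq \nu$. This permits the split
\[
	\sum_{k \geq 1} N(k) \delta_k
	= t \log \innerFunc(\nu)
	+ \sum_{k > \nu} N(k) \delta_k.
\]
For the first summand I use $\sum_j j\nu_j \geq \nu\sum_{j=1}^t j \geq \nu t^2/2$ to obtain $t \leq \sqrt{2 \sum_j j\nu_j / \nu}$, so this piece contributes $\ll ((\log \innerFunc(\nu))/\sqrt{\nu})\sqrt{\sum_j j\nu_j}$. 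For the second summand, Cauchy--Schwarz combined with the tail estimate $\sum_{k > \nu} \delta_k^2 \ll \nu^{-2\epsilon}$ (again from~\cref{assumption:fGrowth}) and the bound on $\sum_k N(k)^2$ from above yields $\ll \nu^{-\epsilon} \sqrt{\sum_j j\nu_j}$. Combining the two via $\sqrt{a}+\sqrt{b} \leq \sqrt{2(a+b)}$ gives the claimed estimate.

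The crux of the proof --- and the only genuine obstacle --- is recognising that the asymmetric weight $j$ appearing in $\sum_j j\nu_j$ is exactly what one obtains from $\sum_k N(k)^2$ via the level-set swap, under the rearrangement that makes the right-hand side smallest. Once this identification is in place, both inequalities fall out from a single application of Cauchy--Schwarz.
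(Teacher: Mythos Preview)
Your proof is correct and follows essentially the same route as the paper's: both introduce the level-set counts $N(k)=y_k$, justify the decreasing rearrangement, derive $\sum_j j\nu_j \ge \tfrac12\sum_k N(k)^2$, and apply Cauchy--Schwarz to $\sum_k N(k)\delta_k$. The only cosmetic difference is in the second inequality: the paper rewrites $t\log\innerFunc(\nu)=\sum_{k\le\nu}N(k)\cdot\frac{\log\innerFunc(\nu)}{\nu}$ and applies a single Cauchy--Schwarz to the full sum, whereas you bound the head term via $t\le\sqrt{2\sum_j j\nu_j/\nu}$ and the tail separately; both yield the same estimate.
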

\begin{proof}
	(Compare~\cite[Lemma 3.3]{ButtkewitzElsholtzFord-SP:2012}.)
	First note that the right hand side of~\cref{eq:CS:ineq} 
	is minimal if the $\nu_j$s are decreasing. 
	Hence, we may subsequently assume that
	$\nu_1\geq\nu_2\geq\ldots\geq\nu_t$.
	Let $y_i = \#\{j : \nu_j\geq i\}$ and observe that
	\begin{equation}
		\sum_{j\leq t}j\nu_j
		= \sum_{j\leq t} \sum_{i\leq \nu_j} j
		= \sum_{i=1}^{\infty} \sum_{j\leq y_i} j
		= \frac{1}{2} \sum_{i=1}^{\infty} y_i(y_i+1)
		\geq \frac{1}{2} \sum_{i=1}^{\infty} y_i^2.
		\label{eq: estimate for the sum over capital Yj}
	\end{equation}
	By partial summation,
	\begin{equation}
		\sum_{j\leq t} \log \innerFunc(\nu_j)
		= \sum_{i=1}^{\infty} (y_i-y_{i+1}) \log \innerFunc(i)
		= \sum_{i=1}^{\infty} y_i \log\frac{\innerFunc(i)}{\innerFunc(i-1)}.
		\label{eq: formula for small x}
	\end{equation}
	The first claim now follows by applying the
	Cauchy--Schwarz inequality to the right hand side, and
	taking~\cref{eq: estimate for the sum over capital Yj}
	into account.
	
	Moreover, if $\nu_t\geq \nu$, then $y_1=y_2=\ldots=y_{\nu}$ and 
	\[
		\sum_{i\leq A} y_i \log\frac{\innerFunc(i)}{\innerFunc(i-1)} 
		= y_1 \log \innerFunc(\nu).
	\]
	By splitting up the sum in~\cref{eq: formula for small x}
	into sums over the ranges $i\leq \nu$ and $i>\nu$, and applying the
	Cauchy--Schwarz inequality, we obtain
	\[
		\sum_{j\leq t}\log \innerFunc(\nu_j)
		\leq \parentheses[\bigg]{ \sum_{i=1}^{\infty} y_i^2 }^{1/2}
		\parentheses[\bigg]{
			\frac{(\log \innerFunc(\nu))^{2}}{\nu}
			+ \sum_{i>\nu}
			\parentheses*{\log\frac{\innerFunc(i)}{\innerFunc(i-1)}}^2
		}^{1/2}.
	\]
	By~\cref{assumption:fGrowth} and $\log(1+1/i)<1/i$, the second
	sum is $\ll \nu^{-2\epsilon}$.
	In view of~\cref{eq: estimate for the sum over capital Yj},
	we have established the second claim.
\end{proof}

\begin{lem}\label{lem: upper estimate for maximal order of f}
	For every $\varepsilon>0$, and $s \coloneqq \omega_{Q}(n) \geq 2$, 
	\[
		\funcName(n) \ll \parentheses*{
			\frac{(\ConstF + \varepsilon) \log n}{s\log s}
		}^{s}.
	\]
\end{lem}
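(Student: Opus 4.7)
The plan is to execute a three-stage estimate: first, reduce $f(n)$ via multiplicativity to a product over the primes from $Q$ dividing $n$; second, apply the arithmetic--geometric mean inequality against the size constraint $\log\Pi_Q(n)\leq\log n$; third, bring in the asymptotic~\cref{assumption:qAsymptotic} to control $\prod_{j=1}^s\log q_j$ from below.

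Writing $\Pi_Q(n)=q_{i_1}^{\nu_1}\cdots q_{i_s}^{\nu_s}$ with $1\leq i_1<\ldots<i_s$ and $\nu_j\geq1$, multiplicativity of $f$ combined with $f(p^\nu)\in\braces{0,1}$ for $p\notin Q$ (cf.~\cref{eq:f:def}) and the bound $g(x)\leq c_f x$ from~\cref{assumption:fsublinear} gives
\[
	f(n)\leq f(\Pi_Q(n))=\prod_{j=1}^s g(\nu_j)\leq c_f^s\prod_{j=1}^s\nu_j.
\]
The size constraint $\sum_j\nu_j\log q_{i_j}\leq\log n$ together with AM--GM applied to the positive factors $\nu_j\log q_{i_j}$, and the trivial inequality $q_{i_j}\geq q_j$, then yield
\[
	\prod_{j=1}^s\nu_j\leq\frac{(\log n)^s}{s^s\prod_{j=1}^s\log q_j}.
\]

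It remains to estimate $\prod_{j=1}^s\log q_j$ from below. From~\cref{assumption:qAsymptotic} one has $\log q_j=\log j+\log\log j+O(1)$, whence $\log\log q_j=\log\log j+O(\log\log j/\log j)$, and the standard estimate $\sum_{j\leq s}\log\log j=s\log\log s+O(s/\log s)$ produces
\[
	\sum_{j=1}^s\log\log q_j=s\log\log s+O\parentheses*{\frac{s\log\log s}{\log s}}.
\]
Given $\varepsilon>0$, set $\eta=\varepsilon/(c_f+\varepsilon)$ so that $c_f/(1-\eta)=c_f+\varepsilon$. Because $\log\log s/\log s\to0$, the preceding estimate yields $\prod_{j=1}^s\log q_j\geq((1-\eta)\log s)^s$ for all $s\geq s_0(\varepsilon,Q,f)$, and consequently $f(n)\leq((c_f+\varepsilon)\log n/(s\log s))^s$ in that range. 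For the finitely many $s$ with $2\leq s<s_0$, the cruder bound $\prod\nu_j\leq(\log n/(s\log q_1))^s$ together with $(\log s/\log q_1)^s\leq(\log s_0/\log q_1)^{s_0}$ absorbed into the implicit constant completes the argument.

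The delicate step is the lower bound on $\prod\log q_j$: one needs its multiplicative base to tend to $\log s$ as $s\to\infty$, since any weaker bound of the form $\prod\log q_j\geq((1-\eta_0)\log s)^s$ with $\eta_0>0$ independent of $s$ would contribute an overhead $(1-\eta_0)^{-s}$ that resists absorption into $(1+\varepsilon/c_f)^s$ for small $\varepsilon$. This forces exploiting the second-order term $\log\log j$ in~\cref{assumption:qAsymptotic}, not merely the leading asymptotic $q_j\asymp j\log j$.
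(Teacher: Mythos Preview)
Your proof is correct and follows the same approach as the paper, which simply refers to \cite[Lemma~3.2]{ButtkewitzElsholtzFord-SP:2012} with the substitution of \cref{assumption:fsublinear} for the bound $x+1\leq 2x$ used there. You have essentially spelled out that argument in full: bound $f(n)$ by $c_f^s\prod\nu_j$, apply AM--GM against $\sum\nu_j\log q_{i_j}\leq\log n$, and use \cref{assumption:qAsymptotic} to control $\prod_{j\leq s}\log q_j$ from below.
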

\begin{proof}
	See~\cite[Lemma 3.2]{ButtkewitzElsholtzFord-SP:2012} and, 
recalling that there
	$\innerFunc$ is $x\mapsto x+1$,  use~\cref{assumption:fsublinear}
	instead of $x+1\leq 2x$.
\end{proof}

\section{Proof of \texorpdfstring{\cref{thm:UpperBound}}{Theorem\autoref{thm:UpperBound}}}
Let $n$ be a positive integer such that $\funcName(\funcName(n))>1$ and
$N=\Pi_{Q}(\funcName(n))$. As before, $\funcName(\funcName(n))=\funcName(N)$.

We now write $N$ as a product of powers of elements in $Q$ and
split these into two groups according to the size of their exponents.
More precisely, we write $N=N'N''$, where
\[
	N'  = u_1^{b_1} \cdots u_w^{b_w}, \quad
	N'' = v_1^{a_1} \cdots v_s^{a_s}
\]
and $u_1<\ldots<u_w$, $v_1<\ldots<v_s$ all belong to $Q$,
are all distinct, and $a_{i}\leq(\log_2 n)^K$ and
$b_{i}>(\log_2 n)^{K}$, for $K=\max\{ 6, 2/\epsilon \}$, 
with $\epsilon$ from~\cref{assumption:fGrowth}.

Clearly, $\log \funcName(N) = \log \funcName(N') + \log \funcName(N'')$, 
so that it suffices to deal with $\funcName(N')$ 
and $\funcName(N'')$ separately. The main term in~\cref{eq:thm:UpperBound}
comes from $\log \funcName(N'')$ (see~\cref{eq: upper bound for f of N''}) and the term $\log \funcName(N')$ is seen to be somewhat smaller
(see~\cref{eq: upper bound for f of N'}).

\subsection{Bounding \texorpdfstring{$\boldsymbol{\funcName(N')}$}{f(N')}}
Write $m_{N'} = q_1^{\beta_1}\cdots q_h^{\beta_h}$.
Due to \cref{lem:m_N}~\cref{lem:m_N:division} we have
$m_{N'} \leq m_N \leq n$ and, hence, $h\ll\log n$.
\cref{lem:m_N}~\cref{lem:m_N:OmegaEstimate}
yields $\Omega(\innerFunc(\beta_i)) \ll \log_2 h \ll \log_3 n$ for every $i$.
Therefore, there are $\gg b_j/\log_3 n$ values of $i$ such that
$u_j\mid \innerFunc(\beta_i)$. Furthermore, assuming, as we may, that $n$
is sufficiently large, \cref{lem:Cauchy-Schwarz} with
$\nu = \lfloor (\log_2 n)^K \rfloor$ shows that,
for $\epsilon'=K/2-2$,
\begin{align*}
	&
	\log \funcName(N')
	= \sum_{j\leq w} \log \innerFunc(b_j)
	\ll (\log_2 n)^{
		-\min\{ \epsilon K, 2 \}
	} \parentheses[\bigg]{ \sum_{j\leq w} jb_j }^{1/2}.
\end{align*}
Moreover, 
\begin{align*}
	\frac{1}{\log_3 n} \sum_{j\leq w} jb_j
	&\leq \sum_{j\leq w} \frac{u_j b_j}{\log_3 n}
	\\
	&\ll \sum_{i\leq h} \sum_{p\mid \innerFunc(\beta_i)} p
	\leq \sum_{i\leq h} \innerFunc(\beta_i) \\
	&\ll \sum_{i\leq h} \beta_{i}\ll\log m_{N'}
	\leq\log n.
\end{align*}
Hence,
\begin{equation}\label{eq: upper bound for f of N'}
	\log \funcName(N')
	\ll \frac{
		\sqrt{(\log n) \log_3 n}
	}{
		(\log_2 n)^{2}
	}.
\end{equation}
\subsection{Bounding \texorpdfstring{$\boldsymbol{\funcName(N'')}$}{f(N'')}}
To estimate $\funcName(N'')$ we may assume that 
\begin{equation}\label{eq:sSize}
	s > \frac{\sqrt{\log n}}{(\log_2 n)^{K/2}},
\end{equation}
for otherwise \cref{lem: upper estimate for maximal order of f}
implies that
\[
	\log \funcName(N'') \ll \frac{\sqrt{\log n}}{(\log_2 n)^{K/2-1}}.
\]
We shall prove the following proposition that is crucial for estimating $f(N')$;
it relates $m_{N''}$ with upper bounds as in \cref{lem:Cauchy-Schwarz}.
\begin{prop} 
	\label{prop: relating weighted sum over exponents with inverse function}
	Let $K=\max\{ 6, 2/\epsilon \}$, 
	with $\epsilon$ from~\cref{assumption:fGrowth}.
	Suppose $N'' = v_1^{a_1} \cdots v_s^{a_s}$
	where $u_1<\ldots<u_w$, $v_1<\ldots<v_s$ all belong to $Q$,
	are all distinct, and $a_{i}\leq(\log_2 n)^K$, 
	and $s$ satisfies~\cref{eq:sSize}. Then, 
	\[
	\log m_{N''} \geq  \parentheses*{
			1 + \LandauO\parentheses*{ \frac{\log_3 n}{\log_2 n} }
		} \ConstGDagger \ConstQAsympt
		\frac{(\log_2 n)^2}{4} \sum_{j\leq s} ja_j.
	\]
\end{prop}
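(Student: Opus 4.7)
By \cref{lem:m_N}~\cref{lem:m_N:factorisation}, write $m_{N''} = q_1^{\nu_1}\cdots q_r^{\nu_r}$ with $\nu_1\geq\ldots\geq\nu_r\geq 1$, so that $\log m_{N''} = \sum_{i=1}^r \nu_i \log q_i$. Minimality of $m_{N''}$ ensures both $\nu_i = \innerFunc^\dagger(\innerFunc(\nu_i))$ and $\innerFunc(\nu_i) > 1$ for all $i\leq r$ (otherwise deleting $q_i^{\nu_i}$ from $m_{N''}$ yields a smaller pre-image of $N''$ under $\funcName$). The key idea is to argue that, for most indices $i$, the value $\innerFunc(\nu_i)$ is a \emph{single} prime $v_{\sigma(i)}\in\braces{v_1,\ldots,v_s}$, so that \cref{assumption:gDaggerGrowth} furnishes a precise estimate for $\nu_i$; the target bound then emerges by re-indexing via $\sigma$ and invoking \cref{assumption:qAsymptotic} to estimate $v_{\sigma(i)}$ and $\log q_i$.

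\textbf{Reduction to single-prime slots.} Apply \cref{lem:m_N}~\cref{lem:m_N:OmegaEstimate} with $k=1$. Since $s_1 = \ConstFLemma q_1 > 1$ by \cref{assumption:fLemma}, the implication is non-vacuous: whenever $q_i > q_{r+1}^{1/s_1}$ one has $\Omega(\innerFunc(\nu_i))\leq 1$, and combined with $\innerFunc(\nu_i)>1$ this forces $\innerFunc(\nu_i)$ to be a single prime. By \cref{assumption:qAsymptotic}, this condition is met for all $i \geq i_0$ with $i_0 \ll r^{1/s_1}\log r = o(r)$. Writing $\innerFunc(\nu_i) = v_{\sigma(i)}$ for $i\in[i_0,r]$, the monotonicity of $\innerFunc$ and of $(\nu_i)$ makes $\sigma$ weakly decreasing, and \cref{assumption:gDaggerGrowth} yields $\nu_i = \ConstGDagger v_{\sigma(i)}\parentheses{1+O(1/\log v_{\sigma(i)})}$.

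\textbf{Summation.} Since $\braces{v_j}_{j=1}^s$ is a strictly increasing subsequence of $Q = \braces{q_j}_{j\geq 1}$, one has $v_l\geq q_l$, whence \cref{assumption:qAsymptotic} gives $v_{\sigma(i)}\geq\ConstQAsympt\sigma(i)\log\sigma(i)(1+o(1))$ as well as $\log q_i = \log i \cdot (1+O(\log_3 n/\log_2 n))$ for $i$ in the ``main range'' $i \gtrsim r/(\log_2 n)^{O(1)}$. Grouping by $l=\sigma(i)$ and writing $t_l = \#\sigma^{-1}(l) \leq a_l$, one finds that for those $(i,l)$ lying in the main range (so that $\log l, \log i = (\log_2 n/2)\cdot(1+O(\log_3 n/\log_2 n))$), the contribution amounts to
\[
\log m_{N''} \geq \ConstGDagger\ConstQAsympt\frac{(\log_2 n)^2}{4}\parentheses*{1+O\parentheses*{\frac{\log_3 n}{\log_2 n}}}\sum_l l\, t_l.
\]
The hypotheses $a_j\leq(\log_2 n)^K$ and $s > \sqrt{\log n}/(\log_2 n)^{K/2}$ bound $r\ll\sqrt{\log n}(\log_2 n)^{K/2}$ and guarantee that the main range is substantial.

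\textbf{Main obstacle.} The core difficulty is showing that both the ``missing copies'' $\sum_l l(a_l - t_l)$ and the boundary contributions (those involving $i$ or $l$ outside the main range) are absorbed into the $O(\log_3 n/\log_2 n)$ error relative to $\sum_j j a_j$. The former is controlled by $\sum_{i<i_0}\Omega(\innerFunc(\nu_i))$, which is estimated by iterated use of \cref{lem:m_N}~\cref{lem:m_N:OmegaEstimate} for $k\geq 1$ (a telescoping sum bounded by $O(r^{1/s_1})$). The lower bound $s > \sqrt{\log n}/(\log_2 n)^{K/2}$ then forces $\sum_j j a_j\gtrsim s^2 \gg s\cdot r^{1/s_1}$, so the shortfall is of strictly lower order in $n$; analogous estimates handle the boundary contributions. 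Combining everything yields $\sum_l l\, t_l = (1+O(\log_3 n/\log_2 n))\sum_j j a_j$, completing the proof.
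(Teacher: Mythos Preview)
Your outline follows the same strategy as the paper's proof: write $m_{N''}=q_1^{\nu_1}\cdots q_r^{\nu_r}$, use \cref{lem:m_N}~\cref{lem:m_N:OmegaEstimate} (with $k=1$) to see that $\innerFunc(\nu_i)$ is prime for all but $O(r^\theta)$ indices (some $\theta<1$), match those primes to the $v_j$, feed in \cref{assumption:gDaggerGrowth} and \cref{assumption:qAsymptotic}, and finally verify that the excluded indices and the shortfall $\sum_l l(a_l-t_l)$ are negligible relative to $\sum_j j a_j$. The telescoping estimate you sketch for $\sum_{i<i_0}\Omega(\innerFunc(\nu_i))$ is precisely the computation the paper carries out to obtain $\Omega(N'')=r+O(r^\theta)$.

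Two points deserve tightening. First, the assertion $r\ll\sqrt{\log n}\,(\log_2 n)^{K/2}$ does not follow from the stated hypotheses: you only have a \emph{lower} bound on $s$, and $r\leq s(\log_2 n)^K$ gives nothing without an upper bound on $s$. Fortunately this claim is unnecessary. Since you are proving a \emph{lower} bound on $\log m_{N''}$, you only need $\log q_i\geq\tfrac{1}{2}\log_2 n\,(1+O(\log_3 n/\log_2 n))$ and $\log l\geq\tfrac{1}{2}\log_2 n\,(1+O(\log_3 n/\log_2 n))$ in the main range---your asserted \emph{equality} $\log i=\tfrac{1}{2}\log_2 n\,(1+O(\ldots))$ is neither true in general nor required. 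Second, the phrase ``analogous estimates handle the boundary contributions'' hides the one genuinely delicate step: one must check that the contribution from small $l$ and from $i$ outside the main range is $O\!\parentheses*{\tfrac{\log_3 n}{\log_2 n}}\sum_j j a_j$, and this is where the precise choice of cut-off (the paper takes $\varepsilon=(3K+1)\log_3 n/\log_2 n$ and restricts to $s^{1-\varepsilon}\leq j\leq s-s^{1-\varepsilon}$) matters; the exponent must be large enough to beat the factor $(\log_2 n)^K$ coming from $a_j\leq(\log_2 n)^K$. The paper also links the $i$- and $j$-ranges explicitly via~\cref{ineq:rMinusSum}, showing that restricting $j\leq s-s^{1-\varepsilon}$ automatically forces the corresponding $i\geq\tfrac{1}{2}s^{1-\varepsilon}$; making this coupling explicit would strengthen your sketch.
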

Let us suppose for the moment that  
\cref{prop: relating weighted sum over exponents
with inverse function} is proved. We can conclude 
by \cref{lem:m_N}~\cref{lem:m_N:division} that
\[
	\log n \geq \log m_{N''}
	\geq \parentheses*{
		1 + \LandauO\parentheses*{ \frac{\log_3 n}{\log_2 n} }
	} \ConstGDagger \ConstQAsympt
	\frac{(\log_2 n)^2}{4} \sum_{j\leq s} ja_j.
\]
Inequality~\cref{eq:CS:ineq} implies that
\begin{equation}
	\log \funcName(N'')
	\leq \frac{\sqrt{\log n}}{\log_2 n} \parentheses*{
			\frac{C_g}{\sqrt{\ConstGDagger\ConstQAsympt}}
			+ \LandauO\parentheses*{ \frac{\log_3 n}{\log_2 n} }
		},
	\label{eq: upper bound for f of N''}
\end{equation}
which concludes the proof of \cref{thm:UpperBound}.

\begin{proof}[Proof of 
\cref{prop: relating weighted sum over exponents
with inverse function}]
Write $m_{N''}=q_1^{\alpha_1}\cdots q_r^{\alpha_r}$ 
for the minimal element of $f^{-1}(N'')$, as in \cref{lem:m_N}.
Our first goal is to establish that $r$ cannot be too small.
By \cref{lem:m_N}, and letting $s_0=1$ for the moment,
the last sum in
\begin{equation}\label{eq:OmegaN''}
	\Omega(N'') = \sum_{j\leq s} a_j
	= \sum_{i\leq r} \Omega(\innerFunc(\alpha_i))
\end{equation}
is seen to be
\[
	\sum_{k=1}^{\infty} k \parentheses[\Big]{
		\pi_Q\parentheses[\big]{ q_{r+1}^{1/s_{k-1}} }
		- \pi_Q\parentheses[\big]{ q_{r+1}^{1/s_k} }
	} 
	= r+1 + \sum_{k=1}^{\infty} \pi_{Q} \parentheses[\big]{ q_{r+1}^{1/s_{k}} }
	\eqqcolon r + E.
\]
To handle $E$, we split the term for $k=1$ from the sum
and estimate the rest trivially, thereby obtaining
$	E
	\ll \pi \parentheses{ q_{r+1}^{1/s_{1}} }.
$
Also, by~\cref{eq:OmegaN''}, $\Omega(N'')\geq r$, so that
\[
	\Omega(N'') = r + \LandauO\parentheses[\big]{
		\pi \parentheses[\big]{ q_{r+1}^{1/\ConstFLemma q_1} }
	}.
\]
Hence,
\begin{equation}\label{eq:OmegaN'':LazyBound}
	r\leq\Omega(N'') \leq r+r^{\theta},
\end{equation}
where $\theta\in(1/\ConstFLemma q_1,1)$ is some constant
(recall that by~\cref{assumption:fLemma} this interval is non-empty).
In particular, $r\gg s$ so that by~\cref{eq:sSize}, $r$ must be
large if $n$ is sufficiently large.
The next goal is to determine $\innerFunc(\alpha_i)$ for all $i$ in a
suitable range. To this end, first note that by
\cref{lem:m_N}~\cref{lem:m_N:OmegaEstimate} we find that
$\innerFunc(\alpha_{i})$ is prime for all $i>r^{\theta}$.
Let $\varepsilon = (3K+1)(\log_3 n)/\log_2 n$, and assume that
$n$ is sufficiently large as to ensure that $\varepsilon < 1-\theta$.
By~\cref{eq:sSize},
\begin{equation}\label{eq:r:s:Ineq}
	2 r^{\theta}
	\leq 2(\Omega(N''))^{\theta}
	\leq 2 \parentheses*{ s(\log_2 n)^K }^{\theta}
	\leq  s^{1-\varepsilon}
	\leq \sum_{s-s^{1-\varepsilon}<j<s} a_j,
\end{equation}
for $n$ sufficiently large. Hence,
\begin{equation}\label{eq:ReducedOmegaN'':Estimate}
	\sum_{j\leq s-s^{1-\varepsilon}} a_{j}
	\leq \Omega(N'')-2r^{\theta}
	\leq r-r^{\theta}.
\end{equation}
As explained above, $\innerFunc(\alpha_{i})$ is prime for all $i>r^{\theta}$
and from~\cref{eq:ReducedOmegaN'':Estimate} we know that this surely
is the case for all $i \geq r - \sum_{k\leq j} a_k$, where
$j \leq s - s^{1-\varepsilon}$.
Since, by \cref{lem:m_N}~\cref{lem:m_N:factorisation} the values
$\innerFunc(\alpha_i)$ are decreasing as $i$ increases,
this yields that $\innerFunc(\alpha_i)=v_j$ 
for $r-\sum_{k\leq j} a_k < i \leq r - \sum_{k<j} a_k$. 
By~\cref{eq:OmegaN'':LazyBound} and~\cref{eq:r:s:Ineq},
\begin{align}
	\label{ineq:rMinusSum}
	r - \sum_{k\leq j}a_{k}
	= r - \Omega(N'') + \sum_{k\leq s-j} a_{j+k}
	\geq s - j - r^{\theta}
	\geq \frac{1}{2} s^{1-\varepsilon}.
\end{align}
From~\cref{assumption:gDaggerGrowth} 
and~\cref{assumption:qAsymptotic} we deduce that
\begin{align}
	\innerFunc^{\dagger}(q_j) \geq \ConstGDagger q_j + \LandauO(q_j/\log q_j)
	\geq \ConstGDagger\ConstQAsympt j \log j \label{ineq:gDagger:qj}
\end{align}
for all sufficiently large $j$. Hence, by~\cref{ineq:gDagger:qj} and~\cref{ineq:rMinusSum}, 
\[
	 \log m_{N''}\geq\ConstGDagger\ConstQAsympt \sum_{
		s^{1-\varepsilon} \leq j \leq s-s^{1-\varepsilon}
	} j (\log j) a_j \parentheses*{ \log s + \LandauO(\log_3 n) }.
\]
By~\cref{eq:sSize}, we find that the right hand side above exceeds
\begin{align*}
	\MoveEqLeft
	\sum_{
		s^{1-\varepsilon} \leq j \leq s-s^{1-\varepsilon}
	} (1-\varepsilon) (\log s)^2 j a_j \parentheses*{
		1 + \LandauO\parentheses*{ \frac{\log_3 n}{\log s} }
	}\\
	&\geq \sum_{
		s^{1-\varepsilon}\leq j\le s-s^{1-\varepsilon}
	} \parentheses*{ 1+\LandauO(\varepsilon) } (\log_2 n)^2 j a_j.
\end{align*}
By the choice of $\varepsilon$, 
we get $s^{\varepsilon} \gg (\log_2 n)^{-K-1}$.
Also, $\sum_{j\leq s} ja_j \geq \frac{1}{2} s^2$.
Now recalling that $a_{j}\leq(\log_2 n)^{K}$ for every $j$,
we infer that
\begin{align*}
	\sum_{ s^{1-\varepsilon} \leq j\leq s-s^{1-\varepsilon} } j a_j
	&= \sum_{j\leq s} ja_j + \LandauO\parentheses*{
			s^{2-\varepsilon}(\log_2 n)^K
		} \\ &
	= \parentheses*{
		1 + \LandauO\parentheses*{ \frac{1}{\log_2 n} }
} \sum_{j\leq s} ja_j,
\end{align*}
thus completing the proof.
\end{proof}

\section{Proof of \texorpdfstring{\cref{thm:LowerBound}}{Theorem\autoref{thm:LowerBound}}}
Recall that the main term in the upper bound
in \cref{thm:UpperBound} stems from an application of
\cref{lem:Cauchy-Schwarz}. Given some large $x>1$, 
we wish to find an integer $n$ smaller than $x$, such that
\[
	\log \funcName(\funcName(n)) 
	= \sum_{\substack{q\in Q \\ q \mid \funcName(n)}}
		\log \innerFunc(\nu_q(\funcName(n)))
\]
is large. The idea is to realise equality in
\cref{lem:Cauchy-Schwarz}. Therefore, recalling that the
inequality was obtained by applying the Cauchy--Schwarz inequality
to~\cref{eq: formula for small x}, we would like to have
\[
	\#\{ q\in Q : \nu_q(\funcName(n)) \geq i \}
	\approx \text{const} \times \log \frac{\innerFunc(i)}{\innerFunc(i-1)}
	\quad (i\geq 1)
\]
with some constant, independent of $i$. 
Furthermore, to have suitable control over $\funcName(n)$ it seems
reasonable to choose $n$ such that the factorisation of
$\funcName(n)$ is known. With this in mind, let
$\varepsilon = \ConstCe\frac{\log_3 x}{\log_2 x}$ 
for $\ConstCe$ sufficiently large, where 
\[
	t = \floor*{
		\parentheses*{ \frac{8\log \innerFunc(1)}{C_g}- \varepsilon}
		\frac{\sqrt{\log x}}{\log_2 x}
	},
\]
and consider
\[
	\nu_j \coloneqq \floor*{
		1-\frac{1}{\alpha}+\frac{1}{(\alpha+1)^{j/t}-1}
	} \quad (1\leq j\leq t).
\]
Evidently,
\begin{equation}
	\nu_j = \frac{1}{\log(\alpha+1)} \frac{t}{j} + \LandauO(1)
	\label{eq:ai:Asymptotics}
\end{equation}
Letting
\[
	n = \prod_{j\leq t} \prod_{i\leq \nu_j}
		q_{ \nu_1 + \ldots + \nu_{j-1} + i }^{ \innerFunc^{\dagger}(q_j) },
\]
we find that
\[
	\funcName(n) = \prod_{j\leq t} 
	\innerFunc(\innerFunc^{\dagger}(q_j))^{\nu_j}
	= \prod_{j\leq t} q_j^{\nu_j}.
\]
Now it remains to give a good lower bound on
$\log \funcName(\funcName(n))$ and an upper bound on $n$. 
To obtain the upper bound, let
\begin{equation}
	y_i = \#\{ j : \nu_j \geq i \}
	= \floor*{ \frac{t}{\log(\alpha+1)}
		\log\parentheses*{ 1 + \frac{1}{i-1+\alpha^{-1}} }
	}.
	\label{eq:Yj:def}
\end{equation}
Observe that $\nu_1+\ldots+\nu_t \ll t\log t$. 
Using~\cref{assumption:qAsymptotic} we find that
\[
	\log q_{\nu_1+\ldots+\nu_t} \leq \log t + 2\log_2 t + \LandauO(1).
\]
Hence,
\begin{align*}
	\log n &\leq \sum_{j\leq t} \nu_j \innerFunc^{\dagger}(q_j)
		\log q_{\nu_{1}+\ldots+\nu_j} \\ &
	\leq \frac{\ConstQAsympt}{\alpha} \parentheses*{
			(\log t)^{2}+3(\log_2 t)\log t + \LandauO(\log t)
		} \sum_{j\leq t} j\nu_j.
\end{align*}
Since $y_i = \LandauO(t/i)$ and by~\cref{eq:ai:Asymptotics}
and~\cref{eq:def:C},
\begin{align*}
	\sum_{j\leq t}j\nu_{j} &
	= \frac{1}{2}\sum_{i\leq \nu_{1}}y_i(y_i+1) \\ &
	= \frac{t^2}{2(\log(\alpha+1))^2}
		\sum_{i=1}^{\infty} \parentheses*{
			\log\parentheses*{ 1+\frac{1}{i-1+\alpha^{-1}} }
		}^{2} + \LandauO(t\log t) \\ &
	=\frac{t^{2}C_g^{2}}{16(\log(\alpha+1))^{2}}+\LandauO(t\log t).
\end{align*}
By the definition of $t$,
$\log t = \frac{1}{2} \log_2 x - \log_3 x + \LandauO(1)$
and $\log_2 t = \log_3 x + \LandauO(1)$. 
By choosing $\ConstCe$ sufficiently large, we get 
\[
	\parentheses*{ 1+\LandauO\parentheses*{ \frac{\log_3 x}{\log_2 x} } }
	\parentheses*{
		1 - \frac{C_g \ConstCe}{8\log(\alpha+1)}
		\frac{\log_3 x}{\log_2 x}
	}^{2}
	\leq 1.
\]
Thus, we infer
\[
	\log n \leq \frac{\ConstQAsympt}{\alpha} \parentheses*{
		1+\LandauO\parentheses*{ \frac{\log_3 x}{\log_2 x} }
	}
	\parentheses*{
		1-\frac{\varepsilon C_g}{8\log(\alpha+1)}
	}^2 \log x
\]
so that $n\leq x^{\ConstQAsympt/\alpha}$ if $x$ is sufficiently
large. Next, we estimate $\log \funcName(\funcName(n))$: 
Using partial summation and~\cref{eq:Yj:def},
\begin{align*}
	\log \funcName(\funcName(n)) &= \sum_{j\leq t}\log \innerFunc(\nu_{j})
	= \sum_{i=1}^{\infty} (y_i-y_{i+1}) \log \innerFunc(i) \\ &
	= \sum_{i=1}^{\infty} y_i \log\frac{\innerFunc(i)}{\innerFunc(i-1)}.
\end{align*}
Due to the construction of $n$ the last sum simplifies to: 
\begin{align*}
	\MoveEqLeft
	\sum_{i\leq \nu_1} y_i \log\frac{\innerFunc(i)}{\innerFunc(i-1)} \\ &
	= \sum_{i\leq \nu_{1}} \parentheses*{
		\frac{t}{\log(\alpha+1)} \parentheses*{
			\log\frac{\innerFunc(i)}{\innerFunc(i-1)}
		}^2 + \LandauO(1/i)
	} \\ &
	= \frac{C_g^2}{8\log(\alpha+1)}t + \LandauO(\log t) \\ &
	= \frac{\sqrt{\log x}}{\log_2 x} \parentheses*{
		C_g + \LandauO\parentheses*{ \frac{\log_3 x}{\log_2 x} }
	}.
\end{align*}
Since $M(x^{\ConstQAsympt/\alpha}) \geq \log \funcName(\funcName(n))$, we
infer~\cref{eq:thm:LowerBound}. This concludes the proof.

\section*{Acknowledgements}
Most parts of the present work were completed whilst M.~T.\ and N.~T.\ were working at Würzburg University and Graz University of Technology, respectively. The financial support of both institutions is highly appreciated.
The authors would like to thank Jan-Christoph Schlage-Puchta for comments on an earlier version of this article.
Moreover, the detailed suggestions of the anonymous referee
are gratefully acknowledged. In particular,
the question for further applications turned out to be fruitful. 


\end{document}